\theoremstyle{plain}
\theoremstyle{definition}
\theoremstyle{remark}
\numberwithin{equation}{section}
\theoremstyle{plain} \declaretheorem[numberwithin = section, name = Theorem,
 refname = {Theorem}, Refname = {Theorem}]{thm}
\theoremstyle{plain} \declaretheorem[numberlike = thm, name = Proposition,
 refname = {Proposition}, Refname = {Proposition}]{prop}
\theoremstyle{plain} \declaretheorem[numberlike = thm, name = Lemma,
refname = {Lemma}, Refname = {Lemma}]{lem}
\theoremstyle{definition} \declaretheorem[numberlike = thm, name = Definition,
 refname = {Definition}, Refname = {Definition}]{df}
\theoremstyle{definition} 
\theoremstyle{definition} \declaretheorem[numberlike = thm, name = Remark,
refname = {Remark}, Refname = {Remark}]{rem}
\theoremstyle{plain} 
\DeclareMathOperator {\real}{\mathbb{R}}
\DeclareMathOperator {\R}{\mathbb{R}}
\DeclareMathOperator {\N}{\mathbb{N}}
\DeclareMathOperator {\BV}{BV}
\DeclareMathOperator{\weak*}{\begin{picture}(10,4)
							\put(0,-2){$\rightharpoonup$}
							\put(3,3){$\ast$}
							\end{picture}}
\DeclareMathOperator {\HN-1}{\mathcal{H}^{N-1}}
\DeclareMathOperator{\gammalimsup}{\Gamma\text{-}\limsup}
\DeclareMathOperator{\gammaliminf}{\Gamma\text{-}\liminf}
\DeclareMathOperator{\gammalim}{\Gamma\text{-}\lim}
\newcommand{\e}{\varepsilon}
  \def\Gamma{Gamma}
  \def\({}
  \def\){}
\title[Surfactants in a non-local model for phase transitions]
{Surfactants in a non-local model for phase transitions}
\author[M. Cicalese]
{M. Cicalese}
\address[Marco Cicalese]{
	Zentrum Mathematik - M7, Technische Universitat M\"unchen, Boltzmannstrasse 3, 85748 Garching, Germany	
}
\email[M. Cicalese]{cicalese@ma.tum.de}
\author[T. Heilmann]
{T. Heilmann}
\address[Tim Heilmann]{
	Zentrum Mathematik - M7, Technische Universitat M\"unchen, Boltzmannstrasse 3, 85748 Garching, Germany	
}
\email[T. Heilmann]{heilmant@ma.tum.de}
\begin{document}
	
	\begin{abstract}

We investigate the influence of surfactants on stabilizing the formation of interfaces in non-local anisotropic
two-phase fluid at equilibrium. The analysis focuses on singularly perturbed non-local van der 
Waals–Cahn–Hillard-type energies, supplemented with a term that accounts for the interaction between the
surfactant and the fluid. We derive by $\Gamma$-convergence the effective surface tension model as the
thickness of the transition layer vanishes and show that it decreases when the surfactant segregates to 
the interface. 
\vskip5pt
\noindent
\textsc{Keywords}: Phase transitions; Surfactants; $\Gamma$-convergence. 
\vskip5pt
\noindent
\textsc{AMS subject classifications:}  
49J45
74Q05 
49J10 
74B15 
\end{abstract}

\maketitle
\tableofcontents

\section*{Introduction}

Within the framework of the gradient theory of fluid-fluid phase transitions, surfactants (surface active agents)
 can be incorporated into the model following the ideas proposed by Perkins, Sekerka, Warren and Langer. 
The resulting energy functional, which represents one of the easiest possible toy models for foam stability, 
is obtained as a modification of the classical van der Waals-Cahn-Hillard energy. The latter is a phase-field
approximation of the classical perimeter functional and it has been analyzed by Modica and Mortola in terms
of $\Gamma$-convergence in the limit as the transition length vanishes in \cite{mod, modmor}. 
Then same kind of analysis has been carried out in \cite{fms} for the surfactant model. In particular an
integral term accounting for the fluid-surfactant interaction is added to the classical Cahn-Hillard functional
to model the adsorption of surface active molecules onto phase interfaces. As a result, the effective surface
tension energy of a phase transition decreases as the density of the surfactant at the phase interface
increases. The possibility of modulating phase transitions by changing the distribution of surfactant has found
many practical applications in enhanced oil recovery, emulsion formation or foam stability to cite a few
(see for instance \cite{surfactantbook} and the references therein).\\

In this paper, we revisit the aforementioned theory and the mathematical results obtained in \cite{fms}
(see also \cite{acbo, bbm} for extensions to more general models of fluid-fluid or multiphase-fluid-fluid 
phase transitions in the presence of surfactants, and \cite{ch} for a surfactant model applied to solid-solid
phase transitions) when the local model is replaced by a non-local one. By this, we mean that spatial
inhomogeneities in the order parameter describing the fluid phases are captured through long-range
differential operators, rather than traditional gradients. Energy functionals as the one we consider here arise
for instance as continuum limits of Ising spin systems with Kac potentials in equilibrium statistical mechanics
(see e.g. \cite{ABCP}). \\

It is noteworthy that our research aligns with a growing interest in non-local variational problems, which has
emerged over the past two decades. For an overview of this field, we direct the reader to the survey
\cite{DNPV} and the book \cite{BV}. Additionally, for a general variational framework dealing with
convolution-type energies in Sobolev spaces, we highlight the recent work \cite{AABPT}. 
From the perspective of applied analysis, significant interest has also developed around the theory of
peridynamics, which has seen various applications in this context
(see, for example, \cite{BFGS, DLWT, DZ, SL}).\\

Given a smooth bounded open set $\Omega \subset \real^N$ (the region occupied by the fluid and the
 surfactant), one considers a scalar function $u:\Omega\to \real$ and a non-negative function 
$\rho:\Omega\to [0,+\infty)$ representing the order parameter of the fluid and the density of the surfactant.
For $u,\rho \in L^1(\Omega)$ we set $\mu=\rho\,dx$ and define for $\e>0$ the family of energy functionals
\begin{align*}
	F_\varepsilon(u,\mu) :=&  \frac{1}{\varepsilon} \int_\Omega W(u(x)) dx + \varepsilon \int_\Omega \int_\Omega
		\frac{1}{\e^N}J\left(\frac{y-x}{\e}\right) \left( \frac{|u(y) - u(x)|}{\varepsilon} \right)^2 dy \,dx \\
	&+\varepsilon \int_\Omega \left( \int_\Omega 
		\frac{1}{\e^N}J\left(\frac{y-x}{\e}\right)  \frac{|u(y) - u(x)|}{\varepsilon} \,dy - \rho(x)  \right)^2 dx.
\end{align*}
Assuming $W:\real\to[0,+\infty)$ to be a double-well potential with wells at $-1$ and $+1$ like
$W(z)=(z^2-1)^2$ and the kernel $J\in L^1(\real^N, [0,\infty))$ to be even and such that 
$\int_{\real^N} J(h)|h| \, dh < \infty$ (see Section \ref{assumption} for the precise assumptions on $W$ and
$J$), one is interested  in the asymptotic behaviour as $\varepsilon$ tends to zero of $F_\varepsilon(u,\mu)$
in the sense of $\Gamma$-convergence. The first two integral terms in $F_\e(u,\mu)$ define the functional
$NCH_\e$, a non-local version of the classical Cahn-Hillard functional, namely
\begin{align*}
	NCH_\varepsilon(u) :=&  \frac{1}{\varepsilon} \int_\Omega W(u(x)) dx + \varepsilon \int_\Omega \int_\Omega
		\frac{1}{\e^N}J\left(\frac{y-x}{\e}\right) \left( \frac{|u(y) - u(x)|}{\varepsilon} \right)^2 dy \,dx. 
\end{align*}
The variational analysis of $NCH_\e$ as $\e$ vanishes has been carried out in \cite{ab} (see also \cite{ab2}
for fine properties of the minimizers). In particular in \cite{ab} it has been proved the pre-compactness in
$BV(\Omega;\{-1,+1\})$ of sequences of phase-fields $u_\e$ with uniformly bounded $NCH_\varepsilon$
energy and it has been computed the $\Gamma$-limit of $NCH_\e$ as $\e$ tends to zero with respect to the
$L^1$ convergence. Roughly speaking, the limit $u$ of a converging subsequence of energy bounded 
$u_\e$ is forced to take the values $-1$ and $1$ almost everywhere, partitioning $\Omega$ in the two 
sets $\{u=-1\}$ and $\{u=1\}$. These two sets are interpreted as the pure phases of the fluid while their
common boundary, corresponding to the jump set $S_u$ of the function $u$, as the phase interface. The
effective asymptotic energy of the system, captured by the $\Gamma$-limit of $NCH_\e$, is proved to be
proportional to the surface measure of the interface $S_u$ weighted by the anisotropic surface tension 
$\sigma$ which depends on the shape of $W$. If one fixes the measure of the set $\{u_\e=-1\}=m$ to be
strictly smaller than the measure of $\Omega$, both phases will be non empty. Moreover as $\e$ vanishes 
the minimal non-local Cahn-Hillard energy corresponds to the partition of $\Omega$ in two sets 
$\{u=-1\}$ (having measure $m$) and $\{u=1\}$ having the least anisotropic perimeter of the common
boundary. Such an energy is achieved along a sequence $u_\e$ of phase fields whose non-local spatial
inhomogeneity $ \int_\Omega J_\varepsilon(y-x)  \frac{|u_\e(y) - u_\e(x)|}{\varepsilon} \,dy$ concentrates on
$x\in S_u$. In this perspective, one can understand the role of the third term in $F_\varepsilon$ as modelling
the interaction between the surfactant and the fluid and favouring the phases of minimizers of $NCH_\e$ to
separate where the surfactant is present. The main result of this paper is stated in Theorem
\ref{thm:Gammalim} and it is obtained combining Theorem \ref{liminf} and Theorem \ref{limsup}. 
It shows that carrying out the $\Gamma$-limit of (an extension of) $F_\e$ with respect to the strong 
$L^1$ convergence of the phase fields and the weak$\ast$-convergence of the surfactant measures, one
obtains a limit functional which is finite for $u\in BV(\Omega, \{ -1,1 \})$ and 
$\mu\in{\mathcal M}(\Omega)$ (the space of positive Radon measures) where it takes the form
\begin{align}\label{introlimit}
	F(u,\mu) := \int_{S_{u}} \sigma \left(\nu_u, \frac{d \mu}{d \HN-1 \llcorner S_{u}} \right)\, d \HN-1 . 
\end{align}
In the formula above $\nu_u$ represents the measure-theoretic normal to the phase interface, 
$S_u$ denotes the jump set of $u$, and 
$\sigma: S^{N-1}\times[0,+\infty)\to[0,+\infty)$ is the anisotropic surface tension per unit length and 
it is obtained by
the blow-up formula in Definition \ref{temp7}. For each direction $\nu \in S^{N-1}$, the function 
$\sigma(\nu, \cdot)$ decreases as a function of the relative density of the surfactant measure with respect to
the surface measure of the interface. Consequently, increasing the surfactant density at the interface lowers
the energy cost of the phase boundary, a phenomenon characteristic of immiscible fluids in the presence of
surface-active agents. It is important to note that the limit energy in \eqref{introlimit} differs from the
isotropic one derived in \cite{fms}, where the surfactant model was obtained within the classical van der
Waals-Cahn-Hilliard gradient theory of phase transitions. Additional examples of anisotropic surface tension
energies can be found in \cite{abcs, acs}, where these are obtained as discrete-to-continuum 
$\Gamma$-limits, modeling the coarse-graining process originating from the microscopic 
Blume-Emery-Griffiths ternary surfactant model. In that model, the anisotropy arises due to the underlying
lattice structure, as in classical Ising models, where particle interactions occur only along the lattice directions. 

\section{Notation and preliminaries}
\subsection{Notation}We denote by ${\mathcal L} ^N$ and ${\mathcal H}^{N-1}$ 
the $N$-dimensional Lebesgue 
measure and the $(N-1)$-dimensional Hausdorff measure in $\mathbb R^N$, respectively.
Given  a Lebesgue measurable set $A \subset \real^N$ we denote its Lebesgue measure by $\mathcal{L
}^N(A)$ or by $|A|$. We denote the open ball centred at $x$ with radius $r$ in $\real^N$ as $B_r(x)$ or
 by $B(x,r)$ and set $\omega_N=|B_1(0)|$.
In the whole paper $\Omega \subset \real^N$ is assumed to be a bounded open set with regular boundary.
We  write $\mathcal{M}(\Omega)$ for the space of positive finite Radon measures on $\Omega$, which
we endow with the weak$\ast$ convergence.
Given vectors $x, e \in \real^N$ where $e \neq 0$, we write $x_e$ for the component of $x$ in direction $e$.
Given a function $u \in L^1(\Omega, \real)$, we denote by $S_u$ the approximate discontinuity set of $u$,
i.e., the set of those points $x \in \Omega$ for which no $z \in \real$ exists such that
$\lim_{r \rightarrow 0^+} |B_r(x)|^{-1} \int_{B_r(x)} |u(y) -z| dy = 0$ holds.
We denote by $BV(\Omega)$ the set of functions of bounded variation in $\Omega$. 
We say that a measurable set $E\subset \real^N$ is a set of finite perimeter in 
$\Omega$ if $\chi_E\in BV(\Omega)$. Denoting by $P(E,\Omega)$ the De Giorgi's perimeter of $E$ in 
$\Omega$, if $E$ is a set of finite perimeter we also write that
$P(E,\Omega)={\mathcal H}^{N-1}(\partial^*E\cap\Omega)<+\infty$ where $\partial^*E$ 
stands for the reduced boundary of $E$.
If $u\in BV(\Omega;\{a,b\})$ is a function of bounded variation in $\Omega$ taking only the 
two distinct values $a,b\in\real$, the $(N-1)$-Hausdorff measure
of $S_u$ equals the perimeter of the level set $\{u = a\}$ (and $\{u = b\}$) in $\Omega$ or in formula 
$\HN-1(S_u) = P(\{u = a\}, \Omega)$. In what follows we will denote by $\nu_u$ the measure theoretic
 normal to $S_u$. For all properties of functions of bounded variations and of 
sets of finite perimeter needed in this paper we refer the reader to \cite{afp}. Finally 
we denote by $c$ and $C$ generic real
 positive constants that may vary from line to line and expression to expression within the same formula.

\subsection{The model}\label{assumption}
In this section we introduce the energy functional whose asymptotic variational behaviour we are going
to analyze in the rest of the paper.
We consider potential functions $W$ that satisfy the following set of assumptions
\begin{align}\label{potential}
	&W:\real \rightarrow [0,\infty) \text{ is continuous,} \\
	&W \text{ has exactly two zeroes at the points $-1$ and $1$}, \nonumber \\
	&W \text{ has at least linear growth at inifinity} \nonumber
\end{align}
and kernels $J$ such that
\begin{align}\label{kernel}
	&J \in L^1(\real^N, [0,\infty)), \\
	&J \text{ is even, i.e. } J(-h) = J(h), h \in \real^N \nonumber \\
	&\int_{\real^N} J(h)|h| \, dh < \infty \nonumber.
\end{align} 
In what follows we set $J_\varepsilon(h) := \frac{1}{\varepsilon^N} J(h/\varepsilon)$.
\begin{df} \label{defenergy}
Given a measurable set  $A \subset \real^N,\, u \in L^1(A)$ and $\rho \in L^1(A,[0,\infty))$ we define
\begin{align*}
	F_\varepsilon(u,\rho,A) :=&  \frac{1}{\varepsilon} \int_A W(u(x)) dx + \varepsilon \int_A \int_A
		J_\varepsilon(y-x) \left( \frac{|u(y) - u(x)|}{\varepsilon} \right)^2 dy \,dx \\
	&+\varepsilon \int_A \left( \int_A 
		J_\varepsilon(y-x)  \frac{|u(y) - u(x)|}{\varepsilon} \,dy - \rho(x)  \right)^2 dx.
\end{align*}
For $u \in L^1(\real^N)$ and $\rho \in L^1(A,[0,\infty))$ we define
\begin{align*}
	\mathcal{F}_\varepsilon(u,\rho,A) :=&  \frac{1}{\varepsilon} \int_A W(u(x)) dx +
		 \varepsilon \int_A \int_{\real^N}
		J_\varepsilon(y-x) \left( \frac{|u(y) - u(x)|}{\varepsilon} \right)^2 dy \, dx \\
	&+\varepsilon \int_A \left( \int_{\real^N}
		J_\varepsilon(y-x)  \frac{|u(y) - u(x)|}{\varepsilon} dy - \rho(x)  \right)^2 dx
\end{align*}
and
\begin{align*}
	\mathcal{F}(u,\rho,A) :=&  \int_A W(u(x)) dx + \int_A \int_{\real^N}
		J(y-x) \left( |u(y) - u(x)| \right)^2 dy \, dx \\
	& +\int_A \left( \int_{\real^N}
		J(y-x)  |u(y) - u(x)| dy - \rho(x)  \right)^2 dx
\end{align*}
\end{df}
In order to state the $\Gamma$-convergence result later on we extend the definition of $F_\varepsilon$
to $L^1(\Omega) \times \mathcal{M}(\Omega)$.
\begin{df} \label{defenergyextended}
Given a measurable set $A \subset \real^N$ we define 
$F_\varepsilon(\cdot,\cdot,A): L^1(A) \times  \mathcal{M}(A) \rightarrow [0,\infty]$ as
\begin{equation}
F_\varepsilon(u,\mu,A)=\begin{cases}
F_\varepsilon(u,\rho,A),&\,\text{if }\mu=\rho{\mathcal L}^N\\+\infty,&\, \text{otherwise}.
\end{cases}
\end{equation}
\end{df}

Here below we define the effective energy density of our limit model. The latter will depend on the density 
of the limit surfactant measure on the jump set $S_u$ of the order parameter $u$ and on the normal to $S_u$. 

\begin{df} \label{temp7}
Given $e \in \real^N$ we denote by $\mathcal{C}_e$, the set of all $(N-1)$-dimensional cubes 
centered at $0$ which lie on the hyperplane orthogonal to $e$.
For $C \in \mathcal{C}_e$ we say that $u: \real^N \rightarrow \real$ is $C$-periodic, if
\begin{equation*}
	u(x + ra) = u(x),
\end{equation*}
where $a$ is any of the $N-1$ axes of $C$ and $r$ denotes the side length of $C$.
For any $C \in \mathcal{C}_e$ we also set
\begin{equation*}
T_C := \{y + t e : y \in C, t \in \real\}.
\end{equation*}
Given $C \in \mathcal{C}_e$ and $\gamma \geq 0$ we define the set of admissible $(u,\rho)$ pairs  
$\mathcal{A}(e,\gamma)$ as
\begin{align*}
	\mathcal{A}(e,\gamma) := &\Big\{(u,\rho) |\, u: \real^N \rightarrow [-1,1] 
		\text{ is } C\text{-periodic, } \lim_{x_e \rightarrow \pm \infty} u(x) = \pm 1, \\
	&\rho: \real^N \rightarrow [0,\infty)\text{ is } C\text{-periodic, } \rho \in L^1(T_C) ,
		\int_{T_C} \rho \, dx \leq  \gamma\HN-1(C) \Big\}
\end{align*}
and the limit energy density $\sigma(e, \gamma)$ as
\begin{equation*}
	\sigma(e, \gamma) := \inf \{ \mathcal{F}(u,\rho,T_C) (\HN-1(C))^{-1} : 
		(u,\rho) \in \mathcal{A}(e,\gamma) \}.
\end{equation*}
\end{df}

\begin{rem}\label{rm-sigma-decr}
We notice that for $\gamma_1\leq\gamma_2$ it holds that
$\mathcal{A}(e,\gamma_1)\subset \mathcal{A}(e,\gamma_2)$. Hence by the very definition of 
$\sigma(e,\gamma)$, we obtain that $\sigma(e,\gamma_1)\geq\sigma(e,\gamma_2)$.
\end{rem}

The effective limit energy of our model
$F(u,\mu,\Omega):L^1(\Omega) \times \mathcal{M}(\Omega)\to[0,+\infty]$ is then defined as 
\begin{equation} \label{limitenergy}
F(u,\mu,\Omega) := 
\begin{cases}
    \displaystyle \int_{S_u} \sigma \left( \nu_{u}, \frac{d \mu}{d \HN-1 \llcorner S_u} \right) d\HN-1, & 
	\text{if } u \in \BV(\Omega, \{-1,1\}) \\
    +\infty, & \text{otherwise},
\end{cases}
\end{equation}
where $\sigma$ is as in \cref{temp7}.

\subsection{Preliminary results}
In this section we collect some preliminary definitions and results from \cite{ab}.

We start with the definition of polyhedral functions.
\begin{df} \label{D5.1}
\emph{Polyhedral sets} $P \subset  \real^N$ are defined as open sets with a Lipschitz boundary
that is contained in the union of finitely many affine hyperplanes in $\real^N$.
The intersection of any such hyperplane with the boundary of $P$ is called a \emph{face} of $P$. 
A \emph{polyhedral set in $\Omega$} is given as the intersection of a polyhedral 
set in $\R^N$ wit $\Omega$.
A function $u \in \BV(\Omega, \{-1,1\})$ is said to be a \emph{polyhedral function}
corresponding to a polyhedral set $A \subset \real^N$, if $\HN-1 (\partial A \cap \partial \Omega) = 0$
and $\{u = 1\} = \Omega \cap A$ (and $\{u = -1\} = \Omega \setminus A$).
A subset $\Sigma$ of an affine hyperplane $H$ in $\R^N$ that is a polyhedral set in $\R^{N-1}$ 
is said to be a \emph{$(N-1)$-dimensional polyhedral set} in $\R^N$.
Let $E\subset \R^N$ and let $\Sigma$ be a $(N-1)$-dimensional polyhedral set in $\R^N$. 
Denoting by $\nu_\Sigma$ the normal to the hyperplane $H$, we define the \emph{projection}
of $\Sigma$ to $E$ as
the set $E_\Sigma := \{ x  \in E \,|\, \exists \, t \in \R, y \in \Sigma  : x = y + t \nu_\Sigma \}$.
\end{df}

We continue by defining a notion of convergence of a non-local notion of traces of a sequence of functions.
\begin{df}
Let $A \subset \real^N$ be an open set, $\Sigma \subset \real^N$ a Lipschitz hypersurface and
$v:\Sigma \rightarrow [-1,1]$. For $h\in\mathbb{N}$ let 
$(\e_h)_h$ be such that $\varepsilon_h \rightarrow 0$ and let $(u_h)_h$ be a sequence of functions 
$u_h : A \rightarrow [-1,1]$. We say that the \emph{ $\varepsilon_h$-traces of $u_h$ relative to $A$ 
converge on $\Sigma$} to $v$, if 
\begin{equation*}
	\int_\Sigma \int_{ \{ \frac{1}{\varepsilon_h} (A - y) \} } \left(
	\int_0^1 J\Big(\frac{x}{t}\Big)\frac{|x|}{t}t^{-N}\,dt |u_h(y + \varepsilon_h x) -v(y)| \right)dx \,d\HN-1(y)
	 \rightarrow_{h \rightarrow \infty} 0.
\end {equation*}
\end{df}

The next two propositions are proven in \cite[Lemma 2.7]{ab} and \cite[Proposition 2.5]{ab}, respectively.
\begin{prop} \label{L2.7}
Let $A, A' \subset \real^N$ be disjoint sets and let $\Sigma \subset \real^N$ be a Lipschitz hypersurface
such that for any $x \in A, x' \in A'$, the segment $[x,x']$ intersects $\Sigma$. Let moreover
sequences $\varepsilon_h \rightarrow 0^+$ and $u_h: A \cup A' \rightarrow [-1,1]$ be given.
If the $\varepsilon_h$-traces of $u_h$ relative to $A$ and $A'$ converge on $\Sigma$ to
$v: \Sigma \rightarrow [-1,1]$ and $v': \Sigma \rightarrow [-1,1]$ respectively, 
then there exists a constant $C$ depending only on $J$ such that
\begin{equation*}
	\limsup_{h \rightarrow \infty} \frac{1}{\varepsilon_h} \int_A \int_{A'} J_{\varepsilon_h}(y - x)
	( u_h(y) - u_h(x) )^2 \, dy \, dx  \leq C \int_\Sigma |v'(x) - v(x)| \, d\HN-1(x).
\end{equation*}
\end{prop}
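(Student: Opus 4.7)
The plan is to convert the double integral over $A \times A'$ into an integral over the separating hypersurface $\Sigma$ via a coarea-type change of variables, and to introduce the traces $v, v'$ by a triangle inequality at the crossing point, so that only the $|v'-v|$ term survives.

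\textbf{Coarea reduction.} Since $u_h$ takes values in $[-1,1]$, one has $(u_h(y) - u_h(x))^2 \leq 2|u_h(y) - u_h(x)|$, which reduces us to a linear quantity. For every pair $(x,\xi)$ with $x \in A$ and $x + \xi \in A'$, the segment $[x, x+\xi]$ meets $\Sigma$ at some $z = x + t\xi$ with $t \in [0,1]$. I would then apply the area formula to the map $(z,t) \mapsto z - t\xi$ from $\Sigma \times [0,1]$ into $\real^N$, whose Jacobian is $|\xi \cdot \nu_\Sigma(z)|$; absorbing the possibility of multiple crossings into an upper bound yields
\[
\frac{1}{\e_h}\!\int_A\!\int_{A'}\! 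J_{\e_h}(y-x)(u_h(y)-u_h(x))^2\,dy\,dx \leq \frac{2}{\e_h}\!\int_0^1\!\!\int_{\real^N}\!\!\int_\Sigma\! J_{\e_h}(\xi) |u_h(z+(1-t)\xi)-u_h(z-t\xi)| |\xi\cdot\nu_\Sigma(z)|\,d\HN-1(z)\,d\xi\,dt,
\]
where the integration is implicitly restricted to $z - t\xi \in A$ and $z + (1-t)\xi \in A'$.

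\textbf{Triangle inequality and the main term.} A triangle inequality at $z$ splits the integrand into $|u_h(z+(1-t)\xi)-v'(z)| + |v'(z)-v(z)| + |v(z)-u_h(z-t\xi)|$. Rescaling $\xi = \e_h \eta$ turns $J_{\e_h}(\xi)\,d\xi$ into $J(\eta)\,d\eta$, and together with $|\xi\cdot\nu_\Sigma(z)| \leq \e_h|\eta|$ it cancels the $1/\e_h$ prefactor. The middle piece collapses immediately to
\[
2\left(\int_{\real^N} J(\eta)|\eta|\,d\eta\right)\int_\Sigma |v'(z)-v(z)|\,d\HN-1(z),
\]
giving the target estimate with $C = 2\int_{\real^N} J(\eta)|\eta|\,d\eta$, which is finite by \eqref{kernel}.

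\textbf{Vanishing of the error terms.} For the first piece, keeping the indicator $\chi_{A'}(z+(1-t)\e_h\eta)$, the substitution $s = 1-t$ followed by $x = s\eta$ (with $d\eta = s^{-N}\,dx$) collapses the $(t,\eta)$-integration into an inner weight $\int_0^1 J(x/s)\frac{|x|}{s}s^{-N}\,ds$ against $|u_h(z+\e_h x)-v'(z)|$, which is exactly the object vanishing by the hypothesis that the $\e_h$-traces of $u_h$ relative to $A'$ converge on $\Sigma$ to $v'$. The third piece is handled symmetrically using the evenness of $J$, through convergence of the $\e_h$-traces relative to $A$ to $v$. Taking $\limsup_{h\to\infty}$ then yields the claim.

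\textbf{Main obstacle.} The genuinely delicate point is identifying precisely the change of variables that makes the kernel emerging from the coarea/rescaling procedure match the non-standard weight $\int_0^1 J(x/t)\frac{|x|}{t}t^{-N}\,dt$ built into the definition of $\e_h$-trace convergence --- essentially the reason the definition is stated in that form. The remaining technicalities (multiple crossings of $\Sigma$ causing over-counting, and tracking the indicator functions of $A$ and $A'$ after the substitutions) are harmless because only an upper bound is sought.
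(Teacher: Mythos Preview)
The paper does not actually prove this proposition: it is quoted verbatim from \cite[Lemma~2.7]{ab}, and the text immediately preceding it says so. Your outline is correct and is precisely the argument of \cite{ab}: the coarea/area-formula parametrisation of $A\times A'$ by $\Sigma\times[0,1]\times\real^N$ via $(z,t,\xi)\mapsto(z-t\xi,z+(1-t)\xi)$, the triangle inequality at the crossing point, and the substitution $x=s\eta$ that reproduces the weight $\int_0^1 J(x/s)\tfrac{|x|}{s}s^{-N}\,ds$ are exactly the steps that motivate the otherwise odd-looking definition of $\varepsilon_h$-trace convergence, as you correctly identify. There is nothing to compare here beyond noting that your sketch recovers the cited proof.
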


\begin{rem}\label{rm-linear-traces}{
Under the assumptions of the previous proposition, in \cite{ab} the following stronger result has been proved:
\begin{equation*}
	\limsup_{h \rightarrow \infty} \frac{1}{\varepsilon_h} \int_A \int_{A'} J_{\varepsilon_h}(y - x)
	|u_h(y) - u_h(x)| \, dy \, dx  \leq C \int_\Sigma |v'(x) - v(x)| \, d\HN-1(x).
\end{equation*}
}
\end{rem}

\begin{prop} \label{P2.5}
Let sequences $\varepsilon_h \rightarrow 0^+$ and $u_h: A \rightarrow \real$ be given such
that $u_h \rightarrow u \in L^1(A)$. Given a Lipschitz function
$g: A \rightarrow \real$, then, up to subsequences, the  $\varepsilon_h$-traces of $u_h$ relative to $A$
converge to $u$ on $\{ g = t \}$ for almost every $t \in \real$.
\end{prop}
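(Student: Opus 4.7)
The plan is to integrate the defining expression for $\varepsilon_h$-trace convergence against the level parameter $t$, apply the coarea formula for the Lipschitz function $g$ to convert the resulting quantity into a solid integral over $A$, and then reduce to a standard nonlocal $L^1$-approximation estimate. A subsequential extraction then delivers a.e. convergence, as the statement requires.

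First I would set
\begin{equation*}
K(x) := \int_0^1 J\Big(\frac{x}{\tau}\Big) \frac{|x|}{\tau} \tau^{-N}\, d\tau, \qquad K_\varepsilon(h) := \varepsilon^{-N} K(h/\varepsilon),
\end{equation*}
and verify by Fubini (after the rescaling $w = x/\tau$) that $\|K\|_{L^1(\real^N)} = \int_{\real^N} J(w)\,|w|\,dw < \infty$ thanks to the moment hypothesis in \eqref{kernel}. For each $t \in \real$ let $I_h(t)$ denote the integral appearing in the definition of $\varepsilon_h$-trace convergence of $u_h$ on $\Sigma_t := \{g = t\}$ towards $v := u|_{\Sigma_t}$; note that for $\mathcal{L}^1$-a.e. $t$ the restriction $u|_{\Sigma_t}$ is $\HN-1$-a.e. defined, by coarea applied to the complement of the Lebesgue set of $u$.

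Next, integrating $I_h$ over $t$, invoking the coarea formula together with $|\nabla g| \le \mathrm{Lip}(g)$, and performing the change of variables $z = y + \varepsilon_h x$ inside the inner integral, I would reach
\begin{equation*}
\int_\real I_h(t)\, dt \leq \mathrm{Lip}(g) \int_A \int_A K_{\varepsilon_h}(z - y)\, |u_h(z) - u(y)|\, dz\, dy.
\end{equation*}
Splitting through $|u_h(z) - u(y)| \le |u_h(z) - u(z)| + |u(z) - u(y)|$, the first contribution is bounded by $\|K\|_{L^1}\, \|u_h - u\|_{L^1(A)}$ and vanishes by hypothesis, while the second is a nonlocal $L^1$-modulus of continuity for $u$, which goes to $0$ as $\varepsilon_h \to 0$ via the substitution $w = (z-y)/\varepsilon_h$, the continuity of translations in $L^1(\real^N)$ (extending $u$ by zero outside $A$), and dominated convergence with majorant $2\|u\|_{L^1}\, K \in L^1$.

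Once $\int_\real I_h(t)\,dt \to 0$ is established, a subsequential extraction yields $I_{h_k}(t) \to 0$ for $\mathcal{L}^1$-a.e. $t$, which is exactly the claimed trace convergence. The main delicate step is the vanishing of the nonlocal $L^1$-modulus of continuity for $u$: this is the place where the first-moment assumption $\int J(w)|w|\,dw < \infty$ from \eqref{kernel} is essential, since without it $K$ would fail to be integrable and both the bound on the first piece and the dominated convergence argument for the second piece would break down.
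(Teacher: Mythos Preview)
Your argument is correct. The paper does not actually reproduce a proof of this proposition; it simply cites \cite[Proposition~2.5]{ab}. Your coarea-then-$L^1$-modulus strategy is precisely the natural route (and is in fact the argument used in \cite{ab}): integrate the trace quantity against the level parameter, use the coarea formula with $|\nabla g|\le \mathrm{Lip}(g)$ to pass to a solid integral over $A$, control the resulting double integral by $\|K\|_{L^1}\|u_h-u\|_{L^1}$ plus a nonlocal $L^1$-modulus of $u$, and conclude by extracting a subsequence from $L^1(\real)$-convergence of $t\mapsto I_h(t)$. The verification that $\|K\|_{L^1(\real^N)}=\int_{\real^N}J(w)|w|\,dw$ and the remark that $u|_{\{g=t\}}$ is $\HN-1$-a.e.\ well defined for a.e.\ $t$ (again by coarea) are both accurate and necessary; nothing is missing.
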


\section{Compactness and \(\Gamma\)-convergence}
This section contains the main results of the paper. In what follows we state the compactness of sequences 
$(u_h),(\rho_h)$ with equibounded energy $F_{\varepsilon_h}(u_h,\rho_h,\Omega)$ as well as the 
$\Gamma$-convergence result for $F_\varepsilon$ as $\varepsilon \rightarrow 0^+$.  We recall that, 
as it is customary in this framework (see \cite{b,dm}), we say that a family of functionals  $E_\varepsilon$
depending on a real parameter $\varepsilon > 0$ $\Gamma$-converge to a functional $E$ and we write 
$\gammalim_{\varepsilon \rightarrow 0^+} E_\varepsilon = E$ if  for any sequence of positive 
numbers $(\varepsilon_h)_h$ such that $\varepsilon_h \rightarrow 0^+$,
the $\Gamma$-limit of $E_{\varepsilon_h}$ exists and equals $E$. The same is intended for
$\gammalimsup$ and $\gammaliminf$.

\begin{thm}[Compactness] \label{compactness}
Given sequences $\varepsilon_h \rightarrow 0^+$,
$u_h \in L^1(\Omega, [-1,1])$ and $\rho_h  \in L^1(\Omega, [0,\infty))$
such that $\sup_h \|\rho_h\|_{L^1(\Omega)} < \infty$ and
$\sup_h F_{\varepsilon_h}(u_h,\rho_h,\Omega) < \infty$,
where $F_{\varepsilon_h}$  is as in \cref{defenergy}, then up to subsequences
$u_h$ converge in $L^1(\Omega)$ to some $u \in \BV(\Omega, \{-1,1\})$ and $\rho_h \mathcal{L}^N$
converge in the weak$\ast$ sense to some $\mu \in \mathcal{M}(\Omega)$.
\end{thm}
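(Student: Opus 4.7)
The plan is to decouple the two compactness statements and reduce each to a standard/cited result. Since the third (surfactant interaction) term in $F_\varepsilon(u_h,\rho_h,\Omega)$ is nonnegative, dropping it gives the estimate
\begin{equation*}
NCH_{\varepsilon_h}(u_h) \;\leq\; F_{\varepsilon_h}(u_h,\rho_h,\Omega) \;\leq\; C,
\end{equation*}
so the sequence $(u_h)$ has uniformly bounded non-local Cahn--Hilliard energy. First I would invoke the compactness theorem from \cite{ab} for $NCH_{\varepsilon_h}$: along a subsequence, $u_h \to u$ in $L^1(\Omega)$ for some $u \in BV(\Omega,\{-1,1\})$. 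This takes care of the phase-field part without doing any non-local estimate by hand, since all the delicate work is already contained in the Alberti--Bellettini result.

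For the surfactant part, the hypothesis $\sup_h \|\rho_h\|_{L^1(\Omega)} < \infty$ says that the associated positive measures $\mu_h := \rho_h \mathcal{L}^N$ form a bounded sequence in $\mathcal{M}(\Omega)$, with $\mu_h(\Omega) = \|\rho_h\|_{L^1(\Omega)}$ uniformly controlled. By the Banach--Alaoglu theorem (equivalently, weak-$\ast$ sequential compactness of bounded sets in $\mathcal{M}(\Omega)$ viewed as the dual of $C_0(\overline{\Omega})$), after passing to a further subsequence I get $\mu_h \rightharpoonup^{\ast} \mu$ for some positive finite Radon measure $\mu \in \mathcal{M}(\Omega)$. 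Extracting a common subsequence on which both convergences hold finishes the argument.

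I do not foresee any genuine obstacle here: the non-negativity of the coupling term does all the heavy lifting by reducing the first part to the cited compactness in \cite{ab}, while the measure part is a routine application of weak-$\ast$ compactness. The only mild subtlety worth a remark is that the uniform $L^1$-bound on $\rho_h$ is \emph{assumed separately} and is not a consequence of the energy bound on $F_{\varepsilon_h}$; accordingly, I would point out that the surfactant interaction term controls only the squared deviation of $\int J_{\varepsilon_h}(y-\cdot)|u_h(y)-u_h(\cdot)|/\varepsilon_h\,dy$ from $\rho_h$, so without the separate integrability hypothesis one could not hope to extract a weak-$\ast$ limit of $\mu_h$.
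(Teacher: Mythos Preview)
Your proposal is correct and follows exactly the paper's approach: the paper's proof consists of the single sentence ``The result is a direct consequence of \cite[Theorem 3.1]{ab} and of the weak$\ast$ compactness of $\mathcal{M}(\Omega)$,'' and you have simply spelled out the two implicit observations (dropping the nonnegative surfactant term to reduce to $NCH_{\varepsilon_h}$, and using the $L^1$-bound on $\rho_h$ for weak$\ast$ compactness). Your additional remark on why the $L^1$-bound on $\rho_h$ must be assumed separately is accurate and a useful clarification not present in the paper.
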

\proof
The result is a direct consequence of \cite[Theorem 3.1]{ab}
and of the weak$\ast$ compactness of $\mathcal{M}(\Omega)$.\qed\\

In the following theorem we states our $\Gamma$-convergence result. As it is customary in this setting 
(see \cite{fms, ch}) the $\Gamma$-convergence  will be understood with respect to the product topology
given by the strong $L^1$ topology of the functions and the weak$\ast$ topology of the measures. 
The proof of the result is a consequence of \cref{liminf} and \cref{limsup} that will be proven in the  next two
subsections.\\

\begin{thm}\label{thm:Gammalim}
Let $W$ be a potential as in (\ref{potential}), let $J$ be a kernel as in $\ref{kernel}$,
let $F_\varepsilon$ be as in \cref{defenergyextended} and let $F$ be defined as in (\ref{limitenergy}).
Then the following $\Gamma$-convergence result holds true.
\begin{equation}
	\Gamma\text{-}\lim\limits_{\varepsilon \rightarrow 0^+} F_{\varepsilon}(u, \mu,\Omega) 
		= F(u,\mu,\Omega).
\end{equation}
\end{thm}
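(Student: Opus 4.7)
Theorem~\ref{thm:Gammalim} is the combination of the liminf inequality (Theorem~\ref{liminf}) and the limsup inequality (Theorem~\ref{limsup}), so a plan for each half is required. Both halves follow the blow-up/recovery-sequence blueprint used in \cite{ab} for the pure non-local Cahn--Hilliard functional, adapted to carry the surfactant coupling along. Compactness is already provided by Theorem~\ref{compactness}, so in either half I may restrict to $u\in\BV(\Omega;\{-1,1\})$ and $\mu\in\mathcal{M}(\Omega)$ without loss of generality.

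\emph{Liminf inequality.} Given $u_h\to u$ in $L^1(\Omega)$ and $\mu_h=\rho_h\mathcal{L}^N\overset{\ast}{\rightharpoonup}\mu$ of uniformly bounded energy, I localise by defining the set functions $\nu_h(A):=F_{\varepsilon_h}(u_h,\rho_h,A)$ on open $A\subset\Omega$, extract a subsequential weak$^\ast$ Radon-measure limit $\nu$, and use $\nu(\Omega)\le\liminf_h F_{\varepsilon_h}(u_h,\rho_h,\Omega)$ to reduce the proof to the pointwise lower bound
\[
\frac{d\nu}{d\mathcal{H}^{N-1}\llcorner S_u}(x_0)\ \ge\ \sigma\bigl(\nu_u(x_0),\gamma(x_0)\bigr),\qquad \gamma:=\frac{d\mu}{d(\mathcal{H}^{N-1}\llcorner S_u)},
\]
valid for $\mathcal{H}^{N-1}$-a.e.\ $x_0\in S_u$. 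At a point $x_0$ that is simultaneously a jump point of $u$, a Lebesgue point of $\gamma$ and a Radon--Nikodym point of $\nu$, I blow up by $\varepsilon_h$: setting $v_h(y):=u_h(x_0+\varepsilon_h y)$ and $\tau_h(y):=\varepsilon_h\rho_h(x_0+\varepsilon_h y)$ gives the change-of-variables identity
\[
F_{\varepsilon_h}(u_h,\rho_h,Q_r(x_0))=\varepsilon_h^{N-1}\mathcal{F}\bigl(v_h,\tau_h,Q_{r/\varepsilon_h}(0)\bigr).
\]
A diagonal extraction in $(\varepsilon_h,r)$, followed by a cut-and-paste on a cube $C\in\mathcal{C}_{\nu_u(x_0)}$ using Propositions~\ref{L2.7} and \ref{P2.5} (to correct tangential traces to $\pm 1$ and zero out the surfactant near the seams), delivers a competitor in $\mathcal{A}(\nu_u(x_0),\gamma(x_0))$; the mass constraint is enforced by the Lebesgue-point property of $\gamma$ combined with the monotonicity of $\sigma$ in its second argument from Remark~\ref{rm-sigma-decr}.

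\emph{Limsup inequality.} By lower semicontinuity of the $\Gamma$-$\limsup$ I first prove the bound on a dense class: $u$ a polyhedral function (Definition~\ref{D5.1}) and $\mu=\gamma\,\mathcal{H}^{N-1}\llcorner S_u$ with $\gamma$ piecewise constant on the faces of $S_u$. For each face $F$ with normal $e$ and local value $\gamma$, fix $\eta>0$ and pick an $\eta$-optimal pair $(u^e,\rho^e)\in\mathcal{A}(e,\gamma)$ in Definition~\ref{temp7}; since $u^e\to\pm1$ at $\pm\infty$ and $\rho^e\in L^1(T_C)$, I may truncate so that $u^e-(\pm1)$ and $\rho^e$ vanish outside a slab $\{|x_e|\le R\}$ at negligible energy cost. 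Near $F$ I then take
\[
u_\varepsilon(x):=u^e\!\Bigl(\tfrac{x-x_F}{\varepsilon}\Bigr),\qquad \rho_\varepsilon(x):=\tfrac{1}{\varepsilon}\rho^e\!\Bigl(\tfrac{x-x_F}{\varepsilon}\Bigr),
\]
for some $x_F\in F$, and extend by $\pm1$ and $0$ off a tubular neighbourhood of $S_u$. The same scaling identity as in the liminf together with the tangential $C$-periodicity of $(u^e,\rho^e)$ gives $F_\varepsilon(u_\varepsilon,\rho_\varepsilon,\,\cdot\,)\to(\sigma(e,\gamma)+\eta)\mathcal{H}^{N-1}(F)$ face by face, while $\rho_\varepsilon\mathcal{L}^N\overset{\ast}{\rightharpoonup}\gamma\,\mathcal{H}^{N-1}\llcorner F$ follows by the usual periodic-averaging argument (saturating $\int_{T_C}\rho^e=\gamma\mathcal{H}^{N-1}(C)$ if necessary). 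The gluing across different faces produces errors supported in $o(1)$-neighbourhoods of the $(N-2)$-skeleton of $S_u$, and their contribution is controlled by Proposition~\ref{L2.7} together with Remark~\ref{rm-linear-traces}. Sending $\eta\to0$, and then removing the regularity on $(u,\gamma)$ by density and the monotonicity in Remark~\ref{rm-sigma-decr}, closes the construction for the general pair $(u,\mu)$.

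\emph{Main obstacle.} The delicate step is the blow-up correction in the liminf: the arbitrary rescaled pair $(v_h,\tau_h)$ must be replaced by an exactly $C$-periodic one with the correct limits at $\pm\infty$, without inflating the energy $\mathcal{F}$. The correction on the $v$-component is carried out exactly as in \cite{ab}, but the surfactant term in $\mathcal{F}$ is especially sensitive because the non-local quantity $\int J(\cdot-\cdot)|v(\cdot)-v(\cdot)|$ concentrates in an $O(1)$ neighbourhood of the interface; the slicing therefore has to zero out $\tau_h$ only where this quantity is already close to zero, so that the third term of $\mathcal{F}$ does not blow up at the seams. Monotonicity of $\sigma$ in $\gamma$ (Remark~\ref{rm-sigma-decr}) forgives a small mass excess or defect in the rescaled surfactant profile, and is ultimately what makes the scheme close.
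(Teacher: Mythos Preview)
Your plan mirrors the paper's: Theorem~\ref{thm:Gammalim} is reduced to the liminf (blow-up, periodic modification controlled via Propositions~\ref{L2.7} and~\ref{P2.5}) and the limsup (recovery on a polyhedral dense class, then density), and your diagnosis of the main obstacle---controlling the surfactant coupling through the cut-and-paste---is exactly what drives the estimates on the terms $I_2$ and $I_3$ in the paper's proof of Theorem~\ref{liminf}. The two-scale blow-up in the paper (first $\delta_m\to 0$, then $h\to\infty$) and your single rescaling by $\varepsilon_h$ with a diagonal in $(\varepsilon_h,r)$ are equivalent bookkeeping for the same argument.

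Two steps in your limsup sketch are underspecified and correspond to real work in the paper. First, your dense class $\mu=\gamma\,\mathcal{H}^{N-1}\llcorner S_u$ does not, without an additional device, reach a general $\mu$ whose singular part relative to $\mathcal{H}^{N-1}\llcorner S_u$ is nontrivial; the paper enlarges the class by adding finite atomic parts $\sum_i\beta_i\delta_{x_i}$ with $x_i\notin S_u$ (Step~1 of Proposition~\ref{limsupPolyhedral}), recovered by densities supported on shrinking balls at zero asymptotic energy, and then approximates $\mu-g\,\mathcal{H}^{N-1}\llcorner S_u$ by such atoms in Theorem~\ref{limsup}. Second, the passage from polyhedral to general $u\in\BV(\Omega;\{-1,1\})$ (Proposition~\ref{temp8}) hinges on the upper semicontinuity of $\nu\mapsto\sigma(\nu,\gamma)$, which the paper proves as a separate proposition so that a Reshetnyak-type continuity argument applies along $u_h\to u$, $|Du_h|\to|Du|$; this is more than ``density and monotonicity'' and should be flagged as an intermediate step.
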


\subsection{\(\Gamma\)-liminf inequality}
In the following proposition we show that we can truncate the functions $u$ and $\rho$ without increasing
the energy $F_\varepsilon(u,\rho,\Omega)$. This property of the energy functional will be needed both in the
proof of the $\gammaliminf$ and in the proof of the $\gammalimsup$ inequality.\\

\begin{prop} \label{truncation}
Let us consider $\varepsilon > 0$, $A \subset \Omega$, $u_\varepsilon \in L^1(A)$ and
$\rho_\varepsilon \in L^1(A,[0,\infty))$. Denoting $u_\varepsilon^T(x) := (u_\varepsilon(x) \wedge 1) \vee -1$
and $\rho_\varepsilon^T(x) := \rho_\varepsilon(x) \wedge \int_A J_\varepsilon(y-x) 
\frac{|u_\varepsilon^T(y) - u_\varepsilon^T(x)|}{\varepsilon} dy$
it holds
\begin{equation*}
	F_\varepsilon(u_\varepsilon,\rho_\varepsilon,A) \geq F_\varepsilon(u_\varepsilon^T, \rho_\varepsilon^T,A).
\end{equation*}
Moreover, if $u_\varepsilon$ and $\rho_\varepsilon$ are such that $\sup_\varepsilon
F_\varepsilon(u_\varepsilon,\rho_\varepsilon,A)\leq C$, then
\begin{equation*}
	\lim_ {\varepsilon\rightarrow 0}\|u_\varepsilon^T - u_\varepsilon\|_{L^1(A)} = 0
\end{equation*}
\end{prop}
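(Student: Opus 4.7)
The plan is to verify the energy inequality term by term for the three summands of $F_\varepsilon$, and then deduce the $L^1$ convergence from the uniform bound on the potential term via the at least linear growth of $W$ at infinity.

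For the potential term, the pointwise inequality $W(u_\varepsilon^T(x))\leq W(u_\varepsilon(x))$ is immediate: on $\{|u_\varepsilon|\leq 1\}$ the truncation is the identity, while on $\{|u_\varepsilon|>1\}$ the value $u_\varepsilon^T(x)\in\{\pm 1\}$ is a zero of $W$. For the non-local Dirichlet-type term I would rely on the fact that $t\mapsto(t\wedge 1)\vee(-1)$ is $1$-Lipschitz, so $|u_\varepsilon^T(y)-u_\varepsilon^T(x)|\leq|u_\varepsilon(y)-u_\varepsilon(x)|$ pointwise, and squaring and integrating against $J_\varepsilon(y-x)$ yields the bound.

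The surfactant term is where the proposition's particular choice of $\rho_\varepsilon^T$ really pays off, and this is the main step to check. Set
\[
a(x):=\int_A J_\varepsilon(y-x)\,\frac{|u_\varepsilon(y)-u_\varepsilon(x)|}{\varepsilon}\,dy,\qquad
a^T(x):=\int_A J_\varepsilon(y-x)\,\frac{|u_\varepsilon^T(y)-u_\varepsilon^T(x)|}{\varepsilon}\,dy,
\]
so that $0\leq a^T(x)\leq a(x)$ by the preceding step and $\rho_\varepsilon^T(x)=\rho_\varepsilon(x)\wedge a^T(x)$. I would then distinguish two cases: if $\rho_\varepsilon(x)\leq a^T(x)$ then $\rho_\varepsilon^T(x)=\rho_\varepsilon(x)$ and $0\leq a^T(x)-\rho_\varepsilon(x)\leq a(x)-\rho_\varepsilon(x)$, so the squared deviations compare correctly; if $\rho_\varepsilon(x)>a^T(x)$ then $a^T(x)-\rho_\varepsilon^T(x)=0$, which is trivially dominated by $(a(x)-\rho_\varepsilon(x))^2$. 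Integrating in $x$ concludes the energy comparison.

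For the convergence statement, $|u_\varepsilon-u_\varepsilon^T|=(|u_\varepsilon|-1)_+$ pointwise, and the energy bound yields $\int_A W(u_\varepsilon)\,dx\leq C\varepsilon$. For each $\eta>0$, continuity of $W$ together with its linear growth at infinity and positivity away from $\{\pm 1\}$ gives $m_\eta:=\inf_{|t|\geq 1+\eta}W(t)>0$, hence $|\{|u_\varepsilon|\geq 1+\eta\}|\leq C\varepsilon/m_\eta\to 0$. Splitting $\int_A(|u_\varepsilon|-1)_+\,dx$ into the part over $\{|u_\varepsilon|<1+\eta\}$ (bounded by $\eta|A|$) and the part over $\{|u_\varepsilon|\geq 1+\eta\}$ (controlled by combining the pointwise bound $(|t|-1)_+\leq C\,W(t)$ for large $|t|$ coming from linear growth with the measure estimate in the intermediate range $1+\eta\leq|t|\leq R$), I expect $\int_A(|u_\varepsilon|-1)_+\,dx\leq\eta|A|+o(1)$, and sending $\eta\to 0$ concludes. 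No serious obstacle is anticipated beyond the surfactant case analysis, which forces the truncation of $\rho_\varepsilon$ to be taken against $a^T$ rather than against $a$.
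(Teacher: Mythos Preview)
Your proposal is correct and essentially matches the paper's approach. The paper declares the energy inequality ``straightforward and left to the reader'' (your term-by-term check with the $1$-Lipschitz truncation and the two-case analysis for the surfactant term is exactly the intended argument), and for the $L^1$ convergence the paper carries out the same three-region splitting you describe: $\{|u_\varepsilon|\leq 1+\delta\}$ contributes at most $\delta|A|$, on $\{1+\delta<|u_\varepsilon|<M\}$ one uses $W\geq c>0$ to bound $(|u_\varepsilon|-1)_+\leq \tfrac{M}{c}W(u_\varepsilon)$, and on $\{|u_\varepsilon|>M\}$ the linear growth gives $(|u_\varepsilon|-1)_+\leq \tfrac{1}{C}W(u_\varepsilon)$, so the whole thing is controlled by $\delta|A|+C\int_A W(u_\varepsilon)\,dx\leq\delta|A|+C\varepsilon$.
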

\begin{proof}
The proof of the first statement is straightforward and is left to the reader.
The proof of the second statement follows the same lines of 
\cite[Lemma 1.14]{ab}. We rewrite it here for the reader's convenience:
Given $\delta > 0$, we have $W(t) > c > 0$ on $\{1+\delta < |t| < M\}$ and $W(t) > C |t|$
on $\{|t| > M\}$ for some $M > 0$ and therefore
\begin{align*}
	\|u_\varepsilon - u_\varepsilon^T\|_{L^1} &\leq \delta |\Omega| 
		+ \frac{M}{c} \int_{\{ 1+\delta < u_\varepsilon(x) < M \}} W(u_\varepsilon(x)) \, dx 
		+ \frac{1}{C}\int_{ \{ |u_\varepsilon(x) > M| \} } W(u_\varepsilon(x)) \, dx \\
	& \leq \delta |\Omega| + C \int_A W(u_\varepsilon (x)) \, dx,
\end{align*}
which implies the claim using that $\delta$ is arbitrary and $F_\varepsilon$ is bounded as 
$\varepsilon \rightarrow 0$.
\end{proof}
The next elementary Lemma will be used in the proof of the liminf inequality below.
\begin{lem} \label{aeConv}
Given a sequence of functions $(f_h)$, where $f_h \in L^1([0,1])$, that satisfies
\begin{equation*}
	\sum_{h = 0}^\infty \| f_h \|_{L^1([0,1])} < \infty,
\end{equation*}
it holds that
\begin{equation*}
	\lim_{h \rightarrow \infty} f_h(t) = 0 \quad \text {for almost every } t \in [0,1].
\end{equation*}
\end{lem}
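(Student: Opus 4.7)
The plan is to reduce the pointwise almost-everywhere convergence to the finiteness almost everywhere of a single nonnegative $L^1$ function obtained by summing the $|f_h|$. This is the standard ``$L^1$ series trick''; the main point is simply to justify interchanging sum and integral, after which the conclusion is immediate.

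More precisely, I would first define
\begin{equation*}
    g(t) := \sum_{h=0}^{\infty} |f_h(t)|,
\end{equation*}
as a (possibly $+\infty$-valued) measurable function on $[0,1]$, obtained as the pointwise monotone limit of the partial sums $g_n(t) := \sum_{h=0}^{n} |f_h(t)|$. By the monotone convergence theorem applied to $(g_n)$, together with linearity of the integral,
\begin{equation*}
    \int_0^1 g(t)\,dt \;=\; \lim_{n\to\infty} \int_0^1 g_n(t)\,dt \;=\; \sum_{h=0}^{\infty} \|f_h\|_{L^1([0,1])} \;<\; \infty
\end{equation*}
by hypothesis. In particular $g \in L^1([0,1])$, and therefore $g(t) < \infty$ for almost every $t \in [0,1]$.

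Finally, at any $t$ where $g(t) < \infty$, the numerical series $\sum_h |f_h(t)|$ converges, so its general term must tend to $0$, i.e.\ $f_h(t) \to 0$. Since this holds for almost every $t$, the claim follows. There is no real obstacle here: the only subtlety is the use of monotone convergence (rather than Fubini--Tonelli for counting measure, which is equivalent) to swap $\sum$ and $\int$, and this is licit precisely because the $|f_h|$ are nonnegative.
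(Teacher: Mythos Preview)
Your proof is correct and follows essentially the same approach as the paper: both use the monotone convergence theorem to interchange the sum and the integral, conclude that $\sum_h |f_h(t)| < \infty$ almost everywhere, and deduce that the general term tends to zero. Your version simply spells out the intermediate steps (defining the partial sums $g_n$ and the limit $g$) more explicitly than the paper does.
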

\begin{proof}
By the monotone convergence theorem we can compute
\begin{equation*}
	\int_0^1 \sum_{h = 0}^\infty |f_h(t)| \, dt =  \sum_{h = 0}^\infty \| f_h \|_{L^1([0,1])} < \infty,
\end{equation*}
which implies that up to a set of measure zero it holds that $\sum_{h = 0}^\infty |f_h(t)| < \infty$,
hence the claim.
\end{proof}

\begin{thm}[lim{-}inf inequality]\label{liminf}
Given $u \in \BV(\Omega,\{-1,1\})$, $\rho \in \mathcal{M}(\Omega)$ and given
$\varepsilon \rightarrow 0^+$, $u_\e \in L^1(\Omega)$ and
$\rho_\e \in L^1(\Omega,[0,\infty))$ such that $u_\e \rightarrow u$ in $L^1(\Omega)$ and 
$\rho_\e \weak* \mu$ in $\mathcal M(\Omega)$, it holds that
\begin{equation*}
	\liminf _{\e \rightarrow 0} F_{\varepsilon}(u_\e,\rho_\e,\Omega) \geq
	\int_{S_u} \sigma \left( \nu_{u}, \frac{d \mu}{d \HN-1 \llcorner S_u} \right) \, d\HN-1,
\end{equation*}
where $F_\varepsilon$ is the functional given by \cref{defenergy}.
\end{thm}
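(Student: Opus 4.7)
The plan is to adapt the classical blow-up approach of \cite{fms} to this non-local setting, relying on the trace machinery of \cite{ab} recalled in \cref{L2.7} and \cref{P2.5}. Applying \cref{truncation} I may assume throughout that $u_\e$ takes values in $[-1,1]$ and $\sup_\e F_\e(u_\e,\rho_\e,\Omega)<\infty$, otherwise the inequality is trivial. Along a subsequence realizing the liminf, I view the three integrands of $F_{\e_h}(u_{\e_h},\rho_{\e_h},\cdot)$ as a single positive Radon measure $\lambda_h$ on $\Omega$; by weak-$\ast$ compactness, up to further extraction $\lambda_h \rightharpoonup^{*} \lambda$ in $\mathcal{M}(\Omega)$. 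Writing $\lambda = g\,\mathcal{H}^{N-1}\llcorner S_u + \lambda^{\perp}$ and $\mu = \theta\,\mathcal{H}^{N-1}\llcorner S_u + \mu^{\perp}$ via Radon--Nikodym, and using that both singular parts are non-negative, the desired inequality $\liminf F_{\e_h}(u_{\e_h},\rho_{\e_h},\Omega) \geq F(u,\mu,\Omega)$ reduces to the pointwise density bound
\begin{equation*}
g(x_0) \;\geq\; \sigma\!\left(\nu_u(x_0),\, \theta(x_0)\right) \qquad \text{for } \mathcal{H}^{N-1}\text{-a.e.\ } x_0 \in S_u.
\end{equation*}

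Fix such a good $x_0$ at which the densities above are attained along cubes $Q^{\nu}_r(x_0)$ oriented by $\nu := \nu_u(x_0)$, at which $\lambda^\perp$ and $\mu^\perp$ have vanishing density, and at which $u$ blows up to the planar jump $\mathrm{sign}((z-x_0)\cdot\nu)$. I choose a diagonal sequence $r_h \downarrow 0$ with $R_h := r_h/\e_h \to \infty$ along which both
\begin{equation*}
\frac{\lambda_h(Q^{\nu}_{r_h}(x_0))}{r_h^{N-1}} \longrightarrow g(x_0), \qquad \frac{1}{r_h^{N-1}} \int_{Q^{\nu}_{r_h}(x_0)} \rho_{\e_h} \, dx \longrightarrow \theta(x_0).
\end{equation*}
Rescaling via $v_h(z) := u_{\e_h}(x_0 + \e_h z)$ and $\tilde\rho_h(z) := \e_h\,\rho_{\e_h}(x_0 + \e_h z)$ on the large cube $Q^{\nu}_{R_h}(0)$, a direct change of variables yields the scaling identity $\lambda_h(Q^{\nu}_{r_h}(x_0))/r_h^{N-1} = \widetilde{\mathcal F}(v_h,\tilde\rho_h,Q^{\nu}_{R_h}(0))/R_h^{N-1}$, where $\widetilde{\mathcal F}$ differs from the $\mathcal F$ of \cref{temp7} only by having inner integrals on $Q^\nu_{R_h}$ instead of $\real^N$, together with $\int_{Q^{\nu}_{R_h}(0)} \tilde\rho_h \,dz /R_h^{N-1} \to \theta(x_0)$ and $v_h \to \mathrm{sign}(z\cdot\nu)$ in $L^{1}_{\mathrm{loc}}(\real^N)$.

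The core step is now to convert $(v_h,\tilde\rho_h)$ into an admissible pair for $\mathcal A(\nu,\theta(x_0)+\delta_h)$ with some $\delta_h\downarrow 0$, at the cost of only an $o(R_h^{N-1})$ modification of the energy. Applying \cref{P2.5} to $z\mapsto z\cdot\nu$ I select levels $t_h \in (R_h/4,R_h/2)$ on which the $\e_h$-traces of $v_h$ on $\{z\cdot\nu=\pm t_h\}$ converge to $\pm 1$; replacing $v_h$ by $\pm 1$ outside the slab $\{|z\cdot\nu|\leq t_h\}$ creates a cross-interaction term which \cref{L2.7} (together with \cref{rm-linear-traces} for the linear form hidden inside the squared surfactant term) bounds by the vanishing trace gap, hence by $o(R_h^{N-1})$. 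An entirely parallel slicing in the $(N-1)$ tangential directions, via \cref{P2.5} applied to the coordinate functions and \cref{aeConv} to extract simultaneously good levels, produces a cube $C\in\mathcal C_\nu$ across whose faces the modified profile matches, allowing a $C$-periodic extension to $\real^N$. Setting $\tilde\rho_h$ to zero on the discarded regions and extending periodically yields an admissible pair in $\mathcal A(\nu,\theta(x_0)+\delta_h)$, so $\mathcal F(\cdot)/\mathcal H^{N-1}(C)\geq\sigma(\nu,\theta(x_0)+\delta_h)$; combining with the scaling identity and passing $h\to\infty$, using \cref{rm-sigma-decr} together with a right-continuity argument for $\sigma(\nu,\cdot)$ at $\theta(x_0)$ (obtained by adding surplus surfactant to a near-optimal competitor for $\sigma(\nu,\theta(x_0))$) closes the density inequality.

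The hardest part is this boundary modification: it must simultaneously (i) render $v_h$ constant on the two normal faces of the slab, (ii) match $v_h$ tangentially to permit the $C$-periodic extension, and (iii) leave the quadratic surfactant term $\int(\int J|v_h(y)-v_h(x)|\,dy - \tilde\rho_h(x))^2\,dx$ perturbed by at most $o(R_h^{N-1})$. Items (i) and (ii) follow the scheme of \cite{ab}; the genuinely new difficulty (iii) I expect to handle by localizing the modification of $\tilde\rho_h$ to regions where $v_h$ has already been set constant, so that both the inner non-local gradient $\int J|v_h(y)-v_h(x)|\,dy$ and the discarded $\tilde\rho_h$ are small there, and the newly created mismatch contributes only an infinitesimal fraction of $R_h^{N-1}$.
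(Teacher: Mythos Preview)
Your blow-up strategy and use of the trace tools of \cite{ab} are the right framework, and the paper proceeds the same way. But two points diverge from the paper, and one contains a genuine gap.

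The gap is your right-continuity step. You land in $\mathcal{A}(\nu,\theta(x_0)+\delta_h)$ and need $g(x_0)\geq\sigma(\nu,\theta(x_0))$; since $\sigma(\nu,\cdot)$ is non-increasing you only get $g(x_0)\geq\limsup_h\sigma(\nu,\theta+\delta_h)\leq\sigma(\nu,\theta)$, so right-continuity is essential. Your parenthetical justification (``adding surplus surfactant to a near-optimal competitor for $\sigma(\nu,\theta(x_0))$'') proves nothing new: any competitor for $\sigma(\nu,\theta)$ is already admissible for $\sigma(\nu,\theta+\delta)$, which only re-derives monotonicity. The correct move is the opposite---take a near-optimizer for $\sigma(\nu,\theta+\delta)$ and \emph{remove} $\delta\,\mathcal{H}^{N-1}(C)$ of surfactant with controlled energy change. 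The paper sidesteps this altogether: for any fixed $s>0$ it replaces $\rho_h$ by $\bar\rho_h:=\rho_h-s/|Q_{\delta_m}|$ on $Q_{\delta_m}$, which by the first line of \eqref{temp5} lies directly in $\mathcal{A}(\nu,\theta(x_0))$, and shows $|\mathcal{F}_{\e_h}(\tilde u_h,\bar\rho_h,\cdot)-\mathcal{F}_{\e_h}(\tilde u_h,\tilde\rho_h,\cdot)|\to 0$. That estimate (and your item (iii)) relies on the second truncation in \cref{truncation}, namely $\rho_\e(x)\leq\int_\Omega J_\e(y-x)|u_\e(y)-u_\e(x)|/\e\,dy$, which you never invoke; without it the cross-terms in the squared surfactant integrand are not controlled.

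The paper also organizes the blow-up as a genuine two-step limit rather than your diagonal $r_h\downarrow 0$, $R_h\to\infty$: first $h\to\infty$ at a fixed cube $Q_{\delta_m}$, then $m\to\infty$. This allows \cref{P2.5}, \cref{L2.7} and \cref{rm-linear-traces} to apply verbatim (they are stated for $\e_h$-traces on a \emph{fixed} domain; after your rescaling the relevant scale is $1$ on growing cubes, which requires reproving them). It also permits a much simpler periodic extension than your tangential slicing: set $\tilde u_h=u_h$ on $Q_{\delta_m}$, $\tilde u_h=u_b$ on $T_{\delta_m}\setminus Q_{\delta_m}$, extend $C_{\delta_m}$-periodically, and put $\tilde\rho_h=\rho_h\chi_{Q_{\delta_m}}$. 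The entire gluing error splits as $I_1+I_2+I_3$, each reduced by the trace estimates to $\int_{\partial Q_{\delta_m}}|u-u_b|\,d\mathcal{H}^{N-1}$ and the analogous reflected term on $\partial T_{\delta_m}$, both of which vanish as $m\to\infty$ thanks to the choice of $\delta_m$ via \cref{aeConv}.
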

\begin{proof}
Using \cref{truncation} and the monotonicity of $\sigma(\nu,\cdot)$ observed in Remark
\ref{rm-sigma-decr}, it is enough to show
the claim for functions $u_\e$ and $\rho_\e$ such that
\begin{equation} \label{temp4}
	u_\e  : \Omega \rightarrow [-1,1] \quad  \text {and} \quad
	\rho_\e(x) \leq \int_\Omega J_{\varepsilon}(y-x) \frac{|u_\e(y) - u_\e(x)|}{\varepsilon} \,dy.
\end{equation}
Extracting  subsequences, we may assume that
\begin{equation}\label{liminf=lim}
	\liminf _{\e \rightarrow 0} F_{\varepsilon}(u_\e,\rho_\e,\Omega) 
	= \lim_{\e \rightarrow 0} F_{\varepsilon}(u_\e,\rho_\e,\Omega)
\end{equation}
and moreover we may assume that this limit is finite. 
We prove the liminf inequality by blow-up. We start by setting
\begin{equation*}
	g_\varepsilon(x) := \frac{1}{\varepsilon} W(u_\varepsilon(x)) + \frac{1}{\varepsilon} \hspace{-.1cm}
	\int_\Omega\hspace{-.1cm} J_\varepsilon(y - x) (u_\varepsilon(y) - u_\varepsilon(x))^2 dy +  \varepsilon
	\left( \int_\Omega \hspace{-.1cm}J_\varepsilon(y - x) \frac{|u_\varepsilon(y) - u_\varepsilon(x)|}
	{\varepsilon} \,dy - \rho_\varepsilon(x) \right)^2
\end{equation*}
and note that $\int_\Omega g_\e(x)\,dx=F_\e(u_\e,\rho_\e,\Omega)$. Hence, by \eqref{liminf=lim}, passing to
a subsequence if necessary, we can assume that  $g_\varepsilon \mathcal{L}^N \weak* \lambda$ for a
measure $\lambda \in \mathcal{M}(\Omega)$. In order to prove the statement of the theorem, it is then
enough to show that
\begin{equation*}
	\frac{d \lambda}{d \HN-1 \llcorner S_u}(x_0) \geq \sigma \left(\nu_{u}(x_0),\frac{d \mu}
	{d \HN-1 \llcorner S_u}(x_0)\right)
	\text { for } \HN-1 \text{ a.e. } x_0 \in S_u.
\end{equation*}
In what follows we refer to \cite{afp} for all the results about the blow-up properties of functions of bounded
variations. Let us denote by $\nu_{u}(x_0)$ the generalized normal at $x_0 \in S_u$ which exists at 
$\HN-1$ a.e. point. We denote by $Q^l_{\nu_u(x_0)}$ the cube obtained by rotating $[-l,l]^N$ around the
origin in such a way that one axis is in direction $\nu_{u}(x_0)$ and assume that $l>0$ is chosen such that
$x_0+Q^l_{\nu_u(x_0)}\subset\Omega$. Denoting by $u_b$ the jump function
\begin{equation*}
	u_b(x) := \begin{cases}
	u^+(x_0), &(x, \nu_{u}(x_0)) > 0\\
	u^-(x_0), &(x, \nu_{u}(x_0)) < 0,
	\end{cases} 
\end{equation*}
it holds that 
\begin{equation}\label{blow-up-u}
	 \lim_{\delta \rightarrow 0^+} \int_{x_0+Q^l_{\nu_u(x_0)}} |u(\delta x) - u_b(x)| \, dx = 0.
\end{equation}

In what follows we fix any sequence $\varepsilon_h \rightarrow 0^+$.
We claim that there exists a vanishing sequence of positive real numbers $(\delta_m)$ and a set 
$S \subset [0,1]$ (depending on this sequence) with $|S|=1$ such that for all $t \in S$
\begin{equation} \label{temp1}
	\lim_{m \rightarrow \infty} \delta_m^{1-N} \int_{\partial Q^{l\delta_m t}_{\nu_u(x_0)}}
	|u(x)-u_b(x)| \, d\HN-1(x) =
	 \lim_{m \rightarrow \infty} \int_{\partial Q^{l t}_{\nu_u(x_0)}} |u(\delta_m x) - u_b(x)| \, d\HN-1(x) = 0
\end{equation}
and such that the $\varepsilon_h$-traces
of $u_h(\delta_m \cdot)$ relative to $Q^{l}_{\nu_u(x_0)}$ converge to
$u(\delta_m \cdot) |_{\partial Q^{l}_{\nu_u(x_0)}}$ as $h \rightarrow \infty$.

We start by observing that, thanks to \eqref{blow-up-u}
there exist $\delta_m \rightarrow 0^+$ such that
\begin{equation*} 
	(2l)^{N-1} \int_0^1 \int_{x_0+\partial Q^{lt}_{\nu_u(x_0)}} |u(\delta_m x) - u_b(x)| \, d\HN-1(x) dt
	= \int_{x_0+Q^l_{\nu_u(x_0)}}  |u(\delta_m x) - u_b(x)| \, dx < 2^{-m}.
\end{equation*}

Moreover, from \cref{P2.5} we know that, up to choosing a subsequence of $\varepsilon_h$, 
for all but at most countably many $\delta$, 
the $\varepsilon_h$-traces of $u_h(\delta \cdot)$ relative to $x_0+Q^{l}_{\nu_u(x_0)}$ converge to 
$u(\delta \cdot) |_{x_0+\partial Q^{l}_{\nu_u(x_0)}}$ as 
$h \rightarrow \infty$ and therefore the $\delta_m$ can be chosen to have this property.
The claim follows by an application of \cref{aeConv} to the functions
$f_m(t) := \int_{{x_0+\partial Q^{lt}_{\nu_u(x_0)}}} |u(\delta_m x) - u_b(x)| \, d\HN-1(x)$.
For $\HN-1$ a.e. $x_0$ and for all but at most countably many $t  \in [0,1]$  it holds that
$\mu(x_0+ \partial Q^{l \delta_m t}_{\nu_u(x_0)}) = \lambda(x_0+ \partial Q^{l \delta_m t}_{\nu_u(x_0)}) 
= 0$.
Therefore we can find a $t \in S$ such that
\begin{equation*}
	\mu(x_0+ \partial Q^{l \delta_m t}_{\nu_u(x_0)}) 
	= \lambda(x_0+ \partial Q^{l \delta_m t}_{\nu_u(x_0)}) = 0 \quad \text{for all } m.
\end{equation*}
We fix such a $t$ and write $Q_{\delta_m} := x_0 + Q^{l \delta_m t}_{\nu_u(x_0)}$ and we set 
$\omega:=(2lt)^{N-1}$ for the $\HN-1$-measure of a midplane of $ x_0 + Q^{l t}_{\nu_u(x_0)}$,
i.e., the measure of any face of $ x_0 + Q^{l t}_{\nu_u(x_0)}$.
It holds
\begin{align*}
&\mu(\partial Q_{\delta_m}) = 0 \Rightarrow \lim_{h \rightarrow \infty} 
		\int_{Q_{\delta_m}} \rho_h \, dx = \mu(Q_{\delta_m}), \\
&\lambda(\partial Q_{\delta_m}) = 0 \Rightarrow \lim_{h \rightarrow \infty} 
		F_{\varepsilon_h}(u_h,\rho_h,Q_{\delta_m}) = \lambda(Q_{\delta_m}), \\
&\lim_{m \rightarrow \infty} \frac{\HN-1 \llcorner S_u (Q_{\delta_m})}{\omega \delta_m^{N-1}} = 1, \\
&\lim_{m \rightarrow \infty} \frac{\mu(Q_{\delta_m})}{\HN-1 \llcorner S_u (Q_{\delta_m})}
	= \frac{d \mu}{d \HN-1 \llcorner S_u}(x_0), \\
&\lim_{m \rightarrow \infty} \frac{\lambda(Q_{\delta_m})}{\HN-1 \llcorner S_u (Q_{\delta_m})}
	= \frac{d \lambda}{d \HN-1 \llcorner S_u}(x_0).
\end{align*}
In particular, given $s > 0$, we have
\begin{align} \label{temp5}
	&\int_{Q_{\delta_m}} \rho_h \,dx \leq \omega \delta_m^{N-1} \frac{d \mu}{d \HN-1
		\llcorner S_u}(x_0) +s \nonumber \\
	&\frac{d \lambda}{d \HN-1 \llcorner S_u}(x_0) + s
		 \geq \lim_{h \rightarrow \infty} F_{\varepsilon_h}(u_h,\rho_h, Q_{\delta_m})
		\omega^{-1} \delta_m^{1-N}
\end{align}
for all $m > m_0$ (depending on $s$).
We now fix a $\delta \in \{ \delta_m \, : \, m \in \mathbb{N}  \}$, denote by $C_\delta$ the intersection
of $Q_\delta$ with the hyperplane through $x_0$ and orthogonal to $ \nu_{u}(x_0)$ and
define $C_\delta$-periodic functions $\tilde u_h: \real^N \rightarrow [-1,1]$,
$\tilde u: \real^N \rightarrow [-1,1]$ and $\tilde \rho_h: \real^N \rightarrow \real$ which in
$T_\delta:=T_{C_\delta}$ are given by
\begin{equation*} 
	\tilde u_h(x)=
	\begin{cases}
		u_h(x), &x \in Q_\delta \\
		u_b(x), &x \in T_{\delta} \setminus Q_\delta,
	\end{cases}\,\,
	 \tilde u(x)=
		\begin{cases}
		u(x), &x \in Q_\delta \\
		u_b(x), &x \in T_{\delta} \setminus Q_\delta,
		\end{cases}\,\,
		\tilde \rho_h(x)=
		\begin{cases}
		\rho_h(x), &x \in Q_\delta \\
		0, &x \in T_{\delta} \setminus Q_\delta.
		\end{cases}
\end{equation*}
Thanks to the $C_\delta$-periodicity of  $\tilde u_h$, as $h \rightarrow \infty$
the $\varepsilon_h$-traces of $\tilde u_h$ relative to $T_\delta$ (resp. relative to 
$\real^N \setminus T_\delta$) converge to $v: \partial T_\delta \rightarrow \real$ defined as

\begin{equation*}
	v(x)=\begin{cases}
 		u_{|\partial T_\delta}(x), & x \in Q_\delta\\
		{u_b}_{|\partial T_\delta}(x), & x \in T_\delta \setminus Q_\delta
\end{cases}
\end{equation*}
(resp. to	$\bar v: \partial T_\delta \rightarrow \real$, $\bar v(x)=v (\bar x)$
where $\bar x$ is the point obtained reflecting $x$ with respect to the hyperplane
through $x_0$ that is parallel to the face on which $x$ lies).
The $\varepsilon_h$-traces of $\tilde u_h$ relative to $Q_\delta$ resp. relative to 
$T_\delta \setminus Q_\delta$ converge to $u_{|\partial Q_\delta}$ resp. to
${u_b}_{|\partial T_\delta \setminus Q_\delta}$.
By the very definition of the energy functionals given in Definition \ref{defenergy} we have that
\begin{equation*}
	0 \leq \mathcal{F}_{\varepsilon_h}(\tilde u_h, \tilde \rho_h, T_\delta) - 
		F_{\varepsilon_h}(\tilde u_h, \tilde \rho_h, Q_\delta)=I_1+I_2+I_3,
\end{equation*}
where we have set
\begin{equation*}
I_1= \varepsilon_h \int_{T_\delta} \int_{\real^N}\hspace{-.2cm}J_{\varepsilon_h}(y-x) 
		\bigg( \frac{|\tilde u_h(y) - \tilde u_h(x)|}{\varepsilon_h} \bigg)^2 dy \,dx \\
	-\varepsilon_h \int_{Q_\delta} \int_{Q_\delta}\hspace{-.1cm}J_{\varepsilon_h}(y-x) 
		\bigg( \frac{|\tilde u_h(y) - \tilde u_h(x)|}{\varepsilon_h} \bigg)^2 dy \,dx , 
\end{equation*}

\begin{equation*}
I_2= \varepsilon_h \int_{T_\delta \setminus Q_\delta}\left( \int_{\real^N} J_{\varepsilon_h} (y-x)
		\frac{|\tilde u_h(y) - \tilde u_h(x)|}{\varepsilon_h}\,dy - \tilde \rho_h(x) \right)^2 dx,
\end{equation*}
\begin{eqnarray*}
I_3= &\varepsilon_h \displaystyle\int_{Q_\delta} \bigg( \int_{\real^N} J_{\varepsilon_h} (y-x)
		\frac{|\tilde u_h(y) - \tilde u_h(x)|}{\varepsilon_h}\,dy - \tilde \rho_h(x) \bigg)^2 dx \\
	&-\varepsilon_h \displaystyle\int_{Q_\delta} \bigg( \int_{Q_\delta} J_{\varepsilon_h} (y-x)
		\frac{|\tilde u_h(y) - \tilde u_h(x)|}{\varepsilon_h}\,dy - \tilde \rho_h(x) \bigg)^2 dx.
\end{eqnarray*}
The term $I_1$ can be estimated using $|\tilde u_h| \leq 1$ to get that
\begin{align*}
I_1 = \,& \varepsilon_h \int_{T_\delta} \int_{\real^N \setminus T_\delta}\hspace{-.5cm}J_{\varepsilon_h}(y-x) 
		\left( \frac{|\tilde u_h(y) - \tilde u_h(x)|}{\varepsilon_h} \right)^2 dy \,dx 
	+  \varepsilon_h \int_{T_\delta \setminus Q_\delta} 
		\int_{T_\delta \setminus Q_\delta}\hspace{-.5cm}J_{\varepsilon_h}(y-x) 
		\left( \frac{|\tilde u_h(y) - \tilde u_h(x)|}{\varepsilon_h} \right)^2 dy \,dx \\
	&+ 2 \varepsilon_h \int_{T_\delta \setminus Q_\delta} \int_{Q_\delta}J_{\varepsilon_h}(y-x) 
		\left( \frac{|\tilde u_h(y) - \tilde u_h(x)|}{\varepsilon_h} \right)^2 dy \,dx \\
	\leq \,& C  \int_{T_\delta} \int_{\real^N \setminus T_\delta}J_{\varepsilon_h}(y-x) 
		 \frac{|\tilde u_h(y) - \tilde u_h(x)|}{\varepsilon_h} \,dy \,dx 
	+  C\int_{T_\delta \setminus Q_\delta} 
		\int_{T_\delta \setminus Q_\delta}J_{\varepsilon_h}(y-x) 
		 \frac{|\tilde u_h(y) - \tilde u_h(x)|}{\varepsilon_h} \,dy \,dx \\
	&+ C\int_{T_\delta \setminus Q_\delta} \int_{Q_\delta}J_{\varepsilon_h}(y-x) 
		 \frac{|\tilde u_h(y) - \tilde u_h(x)|}{\varepsilon_h} \,dy \,dx,
\end{align*}
To estimate the other two terms $I_2$ and $I_3$ one uses $|\tilde u_h| \leq 1$, \eqref{temp4}
and Jensen's inequality to find out that
\begin{align*}
	I_2 &= \varepsilon_h \int_{T_\delta \setminus Q_\delta} \left( \int_{\real^N} J_{\varepsilon_h} (y-x)
		\frac{|\tilde u_h(y) - \tilde u_h(x)|}{\varepsilon_h}\,dy - \tilde \rho_h(x) \right)^2 dx \\
	&\leq  \varepsilon_h  \int_{T_\delta \setminus Q_\delta} \left( \int_{\real^N} J_{\varepsilon_h} (y-x)
		\frac{|\tilde u_h(y) - \tilde u_h(x)|}{\varepsilon_h}\,dy \right)^2 dx \\
	&\leq \|J\|_{L^1(\real^d)} \varepsilon_h \int_{T_\delta \setminus Q_\delta} 
		 \int_{\real^N} J_{\varepsilon_h} (y-x)
		\left( \frac{|\tilde u_h(y) - \tilde u_h(x)|}{\varepsilon_h} \right)^2 \,dy \,dx \leq \|J\|_{L^1(\real^N)} I_1.
\end{align*}
and that
\begin{align*}
	I_3 &= \varepsilon_h \int_{Q_\delta} \left( \int_{\real^N \setminus Q_\delta} J_{\varepsilon_h} (y-x)
		\frac{|\tilde u_h(y) - \tilde u_h(x)|}{\varepsilon_h}\,dy  \right)^2 dx \\
	&\quad +2\varepsilon_h \int_{Q_\delta}  \int_{\real^N \setminus Q_\delta} J_{\varepsilon_h} (y-x)
		\frac{|\tilde u_h(y) - \tilde u_h(x)|}{\varepsilon_h}\,dy
		\left( \int_{Q_\delta} J_{\varepsilon_h} (y-x)
		\frac{|\tilde u_h(y) - \tilde u_h(x)|}{\varepsilon_h}\,dy - \tilde \rho_h(x) \right)dx \\
	&\leq \|J\|_{L^1(\real^N)} \varepsilon_h \int_{Q_\delta}  \int_{\real^N \setminus Q_\delta}
		 J_{\varepsilon_h} (y-x)
		\left( \frac{|\tilde u_h(y) - \tilde u_h(x)|}{\varepsilon_h} \right)^2dy  \,dx \\
	&\quad +2\varepsilon_h \int_{Q_\delta}  \int_{\real^N \setminus Q_\delta} J_{\varepsilon_h} (y-x)
		\frac{|\tilde u_h(y) - \tilde u_h(x)|}{\varepsilon_h}\,dy
		 \int_{Q_\delta} J_{\varepsilon_h} (y-x)
		\frac{|\tilde u_h(y) - \tilde u_h(x)|}{\varepsilon_h}\,dy\,dx \\
	&\leq C \int_{Q_\delta}  \int_{\real^N \setminus Q_\delta} J_{\varepsilon_h} (y-x)
		\frac{|\tilde u_h(y) - \tilde u_h(x)|}{\varepsilon_h}\,dy\,dx \\
	&\leq C \int_{T_\delta}  \int_{\real^N} 
		J_{\varepsilon_h} (y-x) \frac{|\tilde u_h(y) - \tilde u_h(x)|}{\varepsilon_h}\,dy\,dx
		-C \int_{Q_\delta}  \int_{Q_\delta} J_{\varepsilon_h} (y-x)
		\frac{|\tilde u_h(y) - \tilde u_h(x)|}{\varepsilon_h}\,dy\,dx  \\
	&\leq C \int_{T_\delta} \int_{\real^N \setminus T_\delta}J_{\varepsilon_h}(y-x) 
		 \frac{|\tilde u_h(y) - \tilde u_h(x)|}{\varepsilon_h} \,dy \,dx
		+  C\int_{T_\delta \setminus Q_\delta} 
		\int_{T_\delta \setminus Q_\delta}J_{\varepsilon_h}(y-x) 
		 \frac{|\tilde u_h(y) - \tilde u_h(x)|}{\varepsilon_h} \,dy \,dx \\
	&\quad + C\int_{T_\delta \setminus Q_\delta} \int_{Q_\delta}J_{\varepsilon_h}(y-x) 
		 \frac{|\tilde u_h(y) - \tilde u_h(x)|}{\varepsilon_h} \,dy \,dx 
\end{align*}
By an application of \cref{L2.7} (see Remark \ref{rm-linear-traces}) we also have that 
\begin{align}\label{temp-reminder}
	\lim_{h \rightarrow \infty}& \int_{T_\delta} \int_{\real^N \setminus T_\delta}J_{\varepsilon_h}(y-x) 
		 \frac{|\tilde u_h(y) - \tilde u_h(x)|}{\varepsilon_h} \,dy \,dx 
		\leq C \int_{\partial T_\delta} |v - \bar v| \,d\HN-1, \\ \nonumber
	\lim_{h \rightarrow \infty}& \int_{T_\delta \setminus Q_\delta} \int_{Q_\delta}J_{\varepsilon_h}(y-x) 
		 \frac{|\tilde u_h(y) - \tilde u_h(x)|}{\varepsilon_h} \,dy \,dx 
		\leq C \int_{T_\delta \cap \partial Q_\delta} |u_{|\partial Q_\delta} - u_b| \,d\HN-1, 
	\\ \nonumber
	\lim_{h \rightarrow \infty}& \int_{T_\delta \setminus Q_\delta}
		\int_{T_\delta \setminus Q_\delta}J_{\varepsilon_h}(y-x) 
		\frac{|\tilde u_h(y) - \tilde u_h(x)|}{\varepsilon_h} \,dy \,dx \\ \nonumber
	&= \lim_{h \rightarrow \infty}2 \int_{(T_\delta \setminus Q_\delta) \cap \{ (x,\nu_{u}(x_0)) > 0 \}}
		\int_{(T_\delta \setminus Q_\delta) \cap \{ (x,\nu_{u}(x_0)) < 0 \}}
		J_{\varepsilon_h}(y-x) \frac{|\tilde u_h(y) - \tilde u_h(x)|}{\varepsilon_h} \,dy \,dx = 0,
\end{align}
where in the last equation we used that the sets
$(T_\delta \setminus Q_\delta) \cap \{ (x,\nu_{u}(x_0)) > 0 \}$ and
$(T_\delta \setminus Q_\delta) \cap \{ (x,\nu_{u}(x_0)) < 0 \}$ have positive distance,
which implies that the limit of the double integral is zero
(this can be shown by extending $\tilde u_h$ by $0$ on $Q_\delta$).
Defining $\bar \rho_h:\real^N \to \real$ as the $C_\delta$-periodic function such that
\begin{equation*}
	\bar \rho_h(x)=\begin{cases}
		\rho_h(x) - \displaystyle\frac{s}{|Q_\delta|}, &x \in Q_\delta \\
		0, &x \in T_{\delta} \setminus Q_\delta,
		\end{cases}
\end{equation*}
we have that\\
\begin{align}\label{temp-reminder2}
	| \mathcal{F}_{\varepsilon_h}& (\tilde u_h,\bar\rho_h,Q_\delta) - 
		 \mathcal{F}_{\varepsilon_h} (\tilde u_h,\tilde \rho_h,Q_\delta)| \\ \nonumber
	&\leq \varepsilon_h \left|2 \int_{Q_\delta} \int_{\real^N} J_{\varepsilon_h}(y -x) 
		\frac{|\tilde u_h(y) - \tilde u_h(x)|}{\varepsilon_h}\,dy \frac{s}{|Q_\delta|} \,dx \right|
		+ \varepsilon_h \left |\int_{Q_\delta} \left(\frac{s}{|Q_\delta|} \right)^2 dx \right| \\ \nonumber
	&\leq \sqrt{\varepsilon_h} C(\delta) + \varepsilon_h \frac{Cs^2}{|Q_\delta|} 
		\rightarrow_{h \rightarrow \infty}0,
\end{align}
where we used the boundedness of $(\mathcal{F}_{\varepsilon_h}(\tilde u_h, 0,Q_\delta))_h$ which is a
consequence of the second inequality in \eqref{temp4} and the boundedness of 
$(\mathcal{F}_{\varepsilon_h}(\tilde u_h, \rho_h,Q_\delta))_h$.
By the very definition of $\bar \rho_h$, Definition \ref{temp7} and (\ref{temp5}), it holds true that
\begin{equation*}
	(\tilde u_h,\bar \rho_h) \in \mathcal{A}\left(\nu_{u}(x_0), \frac{d\mu}{d\HN-1 \llcorner S_u}(x_0)\right),
	\quad \mathcal{F}_{\varepsilon_h} (\tilde u_h,\bar\rho_h,Q_\delta) \omega^{-1}\delta^{1-N}
	\geq \sigma \left(\nu_{u}(x_0),\frac{d\mu}{d\HN-1 \llcorner S_u}(x_0)\right)
\end{equation*}
Therefore, collecting the above results,
\begin{align*}
	\frac{d \lambda}{d \HN-1 \llcorner S_u} (x_0) + s & \geq \lim_{h \rightarrow \infty} F_{\varepsilon_h}
		(u_h,\rho_h,Q_{\delta_m}) \omega^{-1} \delta_m^{1-N} \\
	&=  \lim_{h \rightarrow \infty} F_{\varepsilon_h} 
		(\tilde u_h,\tilde \rho_h,Q_{\delta_m}) \omega^{-1} \delta_m^{1-N} \\
	&\geq \liminf_{h \rightarrow \infty} \mathcal{F}_{\varepsilon_h} 
		(\tilde u_h,\bar \rho_h,Q_{\delta_m}) \omega^{-1} \delta_m^{1-N} \\
	&\quad - \limsup_{h \rightarrow \infty} |\mathcal{F}_{\varepsilon_h} 
		(\tilde u_h,\tilde \rho_h,Q_{\delta_m}) \omega^{-1} \delta_m^{1-N} - \mathcal{F}_{\varepsilon_h} 
		(\tilde u_h,\bar \rho_h,Q_{\delta_m}) \omega^{-1} \delta_m^{1-N}|\\
	&\quad - \limsup_{h \rightarrow \infty} |F_{\varepsilon_h} 
		(\tilde u_h,\tilde \rho_h,Q_{\delta_m}) \omega^{-1} \delta_m^{1-N} - \mathcal{F}_{\varepsilon_h} 
		(\tilde u_h,\tilde \rho_h,Q_{\delta_m}) \omega^{-1} \delta_m^{1-N}|\\
	&\geq \sigma \left(\nu_{u}(x_0),\frac{d \mu}{d \HN-1 \llcorner S_u}(x_0)\right) -
			C \delta_m^{1-N} \int_{\partial T_{\delta_m}} |v - \bar v| d\HN-1 \\
	&\quad -C \delta_m^{1-N} \int_{T_{\delta_m} \cap\,\partial Q_{\delta_m}} 
		|u_{|\partial Q_{\delta_m}} - u_b| d\HN-1.
\end{align*}
Here in the last estimate we have used \eqref{temp-reminder2} and \eqref{temp-reminder}. Passing to 
the limit as $m \rightarrow \infty$ in the estimate above by \eqref{temp1} we eventually get that for 
$\HN-1$ a.e. $x_0 \in S_u$
\begin{equation*}
	\frac{d \lambda}{d \HN-1 \llcorner S_u}(x_0)+s
	\geq \sigma \left( \nu_{u}(x_0),\frac{d \mu}{d \HN-1 \llcorner S_u}(x_0) \right),
\end{equation*}
which proves the claim thanks to the arbitrariness of $s$.
\end{proof}

\subsection{\(\Gamma\)-limsup inequality}
In this section we complete the proof of the $\Gamma$-convergence result by proving the limsup
inequality for the energy functionals $F_\e$ in \cref{defenergy}. The proof consists of several steps. 
At each step we assume $u$ and $\mu$ to be of increasing generality and provide for them a recovery
sequence. We start with the case of polyhedral functions (see \cref{D5.1}),
the proof of which draws some inspiration from 
\cite[Theorem 5.2]{ab} and is given
in \cref{limsupPolyhedral}. We further generalize it in \cref{temp8}. Therein, we consider functions
of bounded variation $u$ whose jump set need not be polyhedral, while we further keep restrictions on 
the limit measure $\mu$. Finally, in \cref{limsup}, we show the statement in  full generality.

\begin{prop} \label{limsupPolyhedral}
Let $u \in \BV(\Omega, \{-1,1\})$ be a polyhedral function corresponding to a polyhedral set $P$
as in \cref{D5.1}. For $n\in\N$ and $i\in\{1,2,\dots,n\}$ let $x_i \in \Omega \setminus S_u$, $g$
be a piecewise constant function on polyhedral subsets of the faces 
of $S_u$ and let $\mu = \sum_{i = 1}^n \beta_i \delta_{x_i} + g
\HN-1 \llcorner S_u$. Then there exist functions
$u_\varepsilon \in {L^1}(\Omega) $ and  $\rho_\varepsilon \in {L^1}(\Omega,[0,\infty))$, 
such that  $u_\varepsilon \rightarrow u$ in $L^1(\Omega)$
and $\rho_\varepsilon \mathcal{L}^N \weak* \mu$  as $\varepsilon \rightarrow 0^+$ which satisfy
\begin{equation} \label{temp2}
	\limsup _{\varepsilon \rightarrow 0^+} F_{\varepsilon}(u_\varepsilon, \rho_\varepsilon,\Omega) 
	\leq \int_{S_u} \sigma \left( \nu_{u}, \frac{d \mu}{d \HN-1 \llcorner S_u} \right) \,d\HN-1.
\end{equation}
\end{prop}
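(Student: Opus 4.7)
The plan is to construct $(u_\varepsilon,\rho_\varepsilon)$ by patching together $\varepsilon$-rescaled near-optimal profiles from the cell formula defining $\sigma$, localized near each flat piece of the jump set. Since $g$ is piecewise constant on polyhedral subsets of the faces of $P$, decompose $S_u=\bigcup_{j=1}^M \Sigma_j$ with $\nu_u\equiv \nu_j$ and $g\equiv \gamma_j$ on $\Sigma_j$. For $\eta>0$, \cref{temp7} furnishes for each $j$ a cube $C_j\in \mathcal{C}_{\nu_j}$ of side $r_j$ and a pair $(u^{(j)},\rho^{(j)})\in \mathcal{A}(\nu_j,\gamma_j)$ with
\[
\mathcal{F}(u^{(j)},\rho^{(j)},T_{C_j}) \leq \bigl(\sigma(\nu_j,\gamma_j)+\eta\bigr)\mathcal{H}^{N-1}(C_j).
\]
Using $u^{(j)}(x)\to \pm 1$ as $x_{\nu_j}\to \pm \infty$ together with $\int J(h)|h|\,dh<\infty$, a standard truncation in the normal direction makes $u^{(j)}$ identically $\pm 1$ outside a slab $\{|x_{\nu_j}|\leq L_j\}$ at the cost of an extra $O(\eta)$. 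Any remaining deficit $\gamma_j \mathcal{H}^{N-1}(C_j) - \int_{T_{C_j}} \rho^{(j)}$ is absorbed by spreading the missing mass thinly over an enlarged outer slab, contributing only $O(\eta)$ more to the third term.

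\textbf{Construction.} Let $\Pi_j$ be the hyperplane supporting $\Sigma_j$ and $\pi_j$ the orthogonal projection onto $\Pi_j$. On the tube $T_\varepsilon^{(j)}:=\{x\in\Omega : \pi_j(x)\in\Sigma_j,\ \mathrm{dist}(x,\Pi_j)\leq \varepsilon(L_j+1)\}$, set $u_\varepsilon(x):=u^{(j)}((x-a_j)/\varepsilon)$ and $\rho_\varepsilon(x):=\varepsilon^{-1}\rho^{(j)}((x-a_j)/\varepsilon)$, where $a_j\in\Pi_j$ is chosen so that $\varepsilon C_j$ tiles $\Sigma_j$ up to a boundary strip of tangential width $O(\varepsilon)$. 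Outside the union of the tubes set $u_\varepsilon=u$ and $\rho_\varepsilon=0$. For each Dirac atom $\beta_i\delta_{x_i}$, add $\rho_\varepsilon^{(i)}:=\beta_i|B_{r_\varepsilon}(x_i)|^{-1}\chi_{B_{r_\varepsilon}(x_i)}$ with, say, $r_\varepsilon:=\varepsilon^{1/(2N)}$; since $x_i\notin S_u$, for small $\varepsilon$ one has $u_\varepsilon\equiv \pm 1$ on $B_{r_\varepsilon}(x_i)$, so its only contribution to $F_\varepsilon$ comes from the third term and equals $\varepsilon\beta_i^2/|B_{r_\varepsilon}(x_i)|=O(\varepsilon^{1/2})\to 0$.

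\textbf{Convergence and energy estimate.} The $L^1$ convergence $u_\varepsilon\to u$ is immediate, as $u_\varepsilon\neq u$ only on the tubes, whose total volume is $O(\varepsilon)$. The weak$\ast$-convergence $\rho_\varepsilon\mathcal{L}^N\weak*\mu$ follows by testing against continuous $\phi$: the $C_j$-periodicity of $\rho^{(j)}$ together with a Riemann-sum argument produces mass $\gamma_j\mathcal{H}^{N-1}(\Sigma_j)$ on $\Sigma_j$, and the $\rho_\varepsilon^{(i)}$ are standard concentrating approximants of $\delta_{x_i}$. For the energy, on each tube the change of variables $x\mapsto\varepsilon x$ turns the three summands of $F_\varepsilon$ into $\varepsilon^{N-1}$ times the corresponding summands of $\mathcal F$ on the rescaled tube, while the $C_j$-periodicity and the tangential tiling yield
\[
F_\varepsilon(u_\varepsilon,\rho_\varepsilon,T_\varepsilon^{(j)}) \leq \bigl(\sigma(\nu_j,\gamma_j)+\eta\bigr)\mathcal{H}^{N-1}(\Sigma_j)+o(1).
\]
Summing over $j$, adding the negligible atomic contributions, and letting $\eta\to 0$ yields \eqref{temp2}.

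\textbf{Main obstacle.} The delicate point is controlling three kinds of errors at the seams of the construction: (i) non-local interactions between $T_\varepsilon^{(j)}$ and its exterior, where after truncation both sides equal $\pm 1$ and only a thin residual strip contributes; (ii) cross-interactions between neighbouring tubes $T_\varepsilon^{(j)}$ and $T_\varepsilon^{(k)}$ meeting along an $(N-2)$-dimensional edge, where the tubes overlap in a wedge-shaped region of small volume; (iii) the thin tangential strips of width $O(\varepsilon)$ near $\partial\Sigma_j$ where $\varepsilon C_j$ does not tile $\Sigma_j$ exactly. Each is $o(\mathcal{H}^{N-1}(\Sigma_j))$ thanks to $|u^{(j)}|\leq 1$, the first-moment integrability of $J$, and the trace-type bounds in \cref{L2.7} together with \cref{rm-linear-traces}. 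I expect (ii) to be the technical crux; it is precisely the type of localization estimate already carried out in \cite[Theorem~5.2]{ab} and can be adapted here.
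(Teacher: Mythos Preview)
Your strategy is correct and essentially the same as the paper's: rescaled near-optimal cell profiles on each flat piece, Dirac atoms handled by balls of radius $\varepsilon^{1/(2N)}$, and the seam errors controlled via \cref{L2.7} and \cref{rm-linear-traces}. There is, however, one organisational difference worth flagging. The paper does \emph{not} work with thin tubes of width $O(\varepsilon L_j)$ and a truncated profile; instead it partitions $\Omega$ into a finite family of \emph{fixed} polyhedral cylinders $A$ (each either disjoint from $\partial P$ or projecting orthogonally onto a single face), uses the untruncated profile $v(\cdot/\varepsilon)$ on the whole of $A$, and obtains $u_\varepsilon\to u$ in $L^1(A)$ directly from $v(x)\to\pm 1$ as $x_{\nu}\to\pm\infty$. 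The gluing of the pieces is then a single application of the trace lemma across fixed Lipschitz interfaces transverse to $S_u$, where the traces from both sides coincide because $u_\varepsilon\to u$ in $L^1$. This sidesteps your truncation step entirely.

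That matters because the statement you call ``standard'' --- that one may take $u^{(j)}\equiv\pm 1$ outside a slab at cost $O(\eta)$ --- is precisely what the paper proves \emph{after} this proposition, in \cref{finallyConst}, and the proof there invokes \cref{limsupPolyhedral}. So in the paper's logical order your truncation would be circular. It \emph{can} be established independently (the argument for the first two terms is in \cite{ab}, and the third term is handled by the same tail estimate plus \cref{truncation}), but you should either supply that argument or, more simply, switch to the paper's fixed-cylinder decomposition, which makes the truncation unnecessary and reduces your three seam types (i)--(iii) to the single cross-term of Step~3.
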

\begin{proof}
Let us assume without loss of generality (we may reduce to this case 
by the argument of step 3 below) that 
\begin{equation*}
\mu = \beta \delta_{x_1} + \gamma \HN-1 \llcorner \Sigma_g,
\end{equation*}
where $\Sigma_g$ is a polyhedral subset (the subscript $g$ points out the dependence of this set 
on the function $g$) of a face of $P$.

\begin{figure}[hbt!]
\begin{tikzpicture}
	\begin{scope}
		\clip(0,0) ellipse [x radius = 3, y radius = 2.2];
		\fill [fill = black!12] (-3,-2.5) -- (-0.5,-2.5) -- (-0.5, 2.5) -- (-3,2.5) -- cycle;
		\fill [fill = black!12] (0.5,0) -- (0.5,-2.5) -- (3,-2.5) --(3,2.5) -- cycle;
		\fill [fill = black!20] (0.5,0) -- (0.5,2.5) --(3,2.5) -- cycle;
	\end{scope}
	\draw(0,0) ellipse [x radius = 3, y radius = 2.2];
	\draw (-3,0) -- (0.5,0) -- (2.7,-1);
	\draw[line width = 1.2 pt] (-3,0) -- (-0.5,0);
	\draw (-0.5,1) node {$\Omega$};
	\draw (1.5,-0.2) node {$\partial P$};
	\draw (-2,0.2) node {$\Sigma_g$};
	\draw (1,2.7) node {$A$ as in step 1};
	\draw [dashed] (1,2.5) -- (1,1);
	\draw (0,-3) node {$A$ as in step 2};
	\draw [dashed] (-0.5,-2.7) -- (-2, -1);
	\draw [dashed] (0,-2.7) -- (0, -1);
	\draw [dashed] (0.5,-2.7) -- (1.7, -1);
\end{tikzpicture}
\caption{Sketch of the situation of Proposition \ref{limsupPolyhedral}}\label{fig:step1and2}
\end{figure}\begin{itemize}

\item[Step 1.]
	We first consider the case of a polyhedral set $A$ in $\Omega$ that does not intersect the boundary of
	$P$ (see Figure \ref{fig:step1and2})
	and define sequences $u_\varepsilon \rightarrow u$ in $L^1(A)$ and
	$\rho_\varepsilon \mathcal{L}^N \weak* \mu$ in $A$ that satisfy the (local) energy estimate
	\begin{equation}\label{limsup_local}
		\limsup _{\varepsilon \rightarrow 0^+} F_{\varepsilon}(u_\varepsilon, \rho_\varepsilon,A) 
		\leq \int_{S_u |_A} \sigma \left( \nu_{u}, \frac{d \mu}{d \HN-1 \llcorner S_u} \right) \,d\HN-1.
	\end{equation}
	In this case we notice that $u_{|A}$ is constant and that the r.h.s. of \eqref{limsup_local} is zero. 
	Hence it suffices to define
	$u_\varepsilon := u_{|A}$ and 
	$\rho_\varepsilon := \beta\,\omega_N^{-1} \varepsilon^{-1/2}
	\chi_{B(x_1,{\varepsilon^{1/(2N)}})}$, if $x_1 \in A$
	 and $\rho_\varepsilon := 0$ 
	if instead $x_1 \not \in A$ to have
	\begin{equation*}
		u_\varepsilon \rightarrow u, \quad \rho_\varepsilon \mathcal{L}^N \weak* \mu.
	\end{equation*}
	The limsup inequality \ref {limsup_local} holds true since
	\begin{equation*}
		 F_{\varepsilon} (u_\varepsilon, \rho_\varepsilon, A) \leq\varepsilon 
		\int_{B(x_1,{\varepsilon^{1/(2N)}})}
		\frac{\beta^2}{\omega_N^2 \varepsilon} dx \leq C \varepsilon^{1/2}.
	\end{equation*}

\item[Step 2.]
	We now take a polyhedral set $A$ in $\Omega$ such that
	it intersects exactly one face of $P$ and the orthogonal projection of $A$ on the hyperplane 
	which contains this face of $P$ equals the intersection $\partial P \cap A =: \Sigma$.
	In what follows, we define sequences $u_\varepsilon \rightarrow u$ in $L^1(A)$ and
	$\rho_\varepsilon \mathcal{L}^N \weak* \mu$ in $A$ such that the (local) energy estimate
	 \eqref{limsup_local}
	holds true.
	Considering step 3 below, we can assume without loss of generality that $\Sigma \subset \Sigma_g$
	or $\Sigma \cap \Sigma_g = \emptyset$. We now assume that $\Sigma \subset \Sigma_g$, the other
	case can be treated similarly. In order to simplify the notation we
	also assume that $\Sigma \subset \{ x_N = 0 \}$.
	Given any $\eta > 0$, we find $(v,\tilde \rho) \in \mathcal{A}(e_N, \gamma)$ such that for a
	$(N-1)$-dimensional cube
	$C \subset \{ x_N = 0 \}$ it holds
	$\HN-1(C)^{-1} \mathcal{F} (v,\tilde \rho, T_C) \leq \sigma(e_N,\gamma) + \eta$.
	We define
	\begin{equation*}
		u_\varepsilon(x) := v\left(\frac{x}{\varepsilon}\right), \quad \tilde \rho_\varepsilon (x) := 
		\frac{1}{\varepsilon} \tilde \rho \left( \frac{x}{\varepsilon} \right)
	\end{equation*}
	and
	\begin{align*}
		r_\varepsilon(x)& := \frac{\beta}{\omega_N \varepsilon^{1/2}}
			\chi_{B_{\varepsilon^{1/(2N)}}(x_1)} (x)
			+ \frac{\gamma - (\HN-1(C))^{-1} \int_{T_C} \tilde \rho dx}{\varepsilon^{1/2}}
			\chi_{\{ x: 0 < x_N < \varepsilon^{1/2} \}}(x) \\
		\rho_\varepsilon(x) &:= \tilde \rho_\varepsilon (x) + r_\varepsilon(x).
	\end{align*}
	Remark that by the Riemann-Lebesgue Lemma it holds
	\begin{equation*}
		\tilde \rho_\varepsilon \mathcal{L}^N \llcorner A \weak* (\HN-1(C))^{-1} \int_{T_C}\tilde \rho \,dx 
		\HN-1 \llcorner \Sigma,
	\end{equation*}
	hence the convergence $\rho_\varepsilon \mathcal{L}^N \llcorner A \weak* \mu \llcorner A$ holds true.
	We note that, by the very definition of $\mathcal{A}(e_N, \gamma)$, one has that $r_\varepsilon \geq 0$. 
	Since $|u_\e|\leq 1$, using \cref{truncation}, we may assume that
	$\int_A J_{\varepsilon} (y-x) \frac{|u_\varepsilon(y) -u_\varepsilon(x)|}{\varepsilon} dy
	 - \tilde \rho_\varepsilon(x) \geq 0$ and have that
	\begin{align*}
		0\leq F_{\varepsilon}&(u_\varepsilon, \rho_\varepsilon, A) -  F_{\varepsilon}
			(u_\varepsilon, \tilde \rho_\varepsilon, A) \\
		& = 2 \varepsilon \int_A \left( \int_A J_{\varepsilon} (y-x)
			 \frac{|u_\varepsilon(y) -u_\varepsilon(x)|}{\varepsilon}
			\,dy - \tilde \rho_\varepsilon(x) \right) (-r_\varepsilon(x)) \,dx + \varepsilon 
			\int_A r_\varepsilon(x)^2 \,dx \\
		& \leq \varepsilon \int_A r_\varepsilon(x)^2 \,dx \leq C \varepsilon^{1/2}.
	\end{align*}
	The estimate above implies that
	\begin{equation} \label{temp3}
		\limsup_{\varepsilon \rightarrow 0^+} F_{\varepsilon}(u_\varepsilon, \rho_\varepsilon, A)
		\leq \limsup_{\varepsilon \rightarrow 0^+} F_{\varepsilon}(u_\varepsilon, \tilde \rho_\varepsilon, A).
	\end{equation}
	We notice that
	\begin{equation*}
		\mathcal{F}_{\varepsilon}(u_\varepsilon ,\tilde \rho_\varepsilon,T_{\varepsilon C}) =
		\varepsilon^{N-1} \mathcal{F}(u,\tilde \rho, T_C).
	\end{equation*}
	Given a covering (depending on $\varepsilon$) of $\Sigma$ made of $M_\varepsilon$
	translated copies of $\varepsilon C$, this equality implies
	\begin{equation*}
		F_{\varepsilon}(u_\varepsilon ,\tilde \rho_\varepsilon, A) \leq M_\varepsilon \varepsilon^{N-1}
		\mathcal{F}(u,\tilde \rho, T_C).
	\end{equation*}
	Further choosing the covering such that
	$M_\varepsilon \HN-1(C) \varepsilon^{N-1} \rightarrow \HN-1(\Sigma)$ as $\varepsilon \rightarrow 0^+$,
	we have shown that
	\begin{equation*}
		\limsup_{\varepsilon \rightarrow 0^+}F_{\varepsilon}(u_\varepsilon,\tilde \rho_\varepsilon, A) 
		\leq \HN-1(\Sigma)
		(\HN-1(C))^{-1} \mathcal{F} (v,\tilde \rho, T_C) \leq \HN-1(\Sigma) \sigma(e_N,\gamma) 
		+\HN-1(\Sigma) \eta.
	\end{equation*}
	The latter estimate together with (\ref{temp3}) eventually gives 
	\begin{equation*}
		\limsup_{\varepsilon \rightarrow 0^+} F_{\varepsilon}(u_\varepsilon, \rho_\varepsilon, A) \leq
			\HN-1(\Sigma) \sigma(e_N,\gamma)  +C \eta,
	\end{equation*}
	where $\eta$ can be chosen arbitrarily small. Since $u(x) \rightarrow \pm 1$ as 
	$x \rightarrow \pm \infty$, also $u_\varepsilon$ converges to $u$ in $L^1(A)$ and the proof of step 2
	is completed.

\item[Step 3.] 
	We can now prove the statement of the theorem observing that $\Omega$ is
	a finite union of sets of the type considered in the previous two steps, In fact, let us denote by 
	$A_1,A_2$ two sets, each given by the finite unions of 
	sets of the type considered in the previous steps and let
	$(u_\varepsilon^1, \rho_\varepsilon^1)$ (resp. $(u_\varepsilon^2,\rho_\varepsilon^2)$) 
	satisfy the stated convergence properties and energy estimates for $A_1$ (resp. $A_2$).
	We define the sequence of pairs $(u_\varepsilon, \rho_\varepsilon)$ as follows:
	\begin{align*}
		\begin{array}{rl}
		u_\varepsilon:&x\in A_1 \cup A_2 \mapsto  \left\{	\begin{array}{rl}
											u_\varepsilon^1(x), &x \in A_1\\
											u_\varepsilon^2(x), &x \in A_2.\\
											\end{array} \right. \\ 			\cr							
		\rho_\varepsilon:& x\in A_1 \cup A_2\mapsto  \left\{	\begin{array}{rl}
											\rho_\varepsilon^1(x), &x \in A_1\\
											\rho_\varepsilon^2(x), &x \in A_2.\\
											\end{array} \right. \\
		\end{array}
	\end{align*}
	It holds that $u_\varepsilon \rightarrow u$ and $\rho_\varepsilon \mathcal{L}^N \weak* \mu$ 
	on $A_1 \cup A_2$. It remains to show that
	\begin{equation*}
		\limsup_{\varepsilon \rightarrow 0^+} F_{\varepsilon}(u_\varepsilon, \rho_\varepsilon, A_1 \cup A_2) - 
		F_{\varepsilon}(u_\varepsilon, \rho_\varepsilon, A_1) 
		- F_{\varepsilon}(u_\varepsilon, \rho_\varepsilon, A_2) = 0.
	\end{equation*}
	As already observed in Step 2, we may assume that $|u_\varepsilon(x)| \leq 1$ for a.e. $x\in A_i$.
	Exploiting such a property, expanding the squares in the definition of $F_\e$, thanks to Jensen's
	inequality we obtain 
	\begin{align*}
		0& \leq F_{\varepsilon}(u_\varepsilon, \rho_\varepsilon, A_1 \cup A_2) - 
			F_{\varepsilon}(u_\varepsilon, \rho_\varepsilon, A_1) 
			- F_{\varepsilon}(u_\varepsilon, \rho_\varepsilon, A_2) \\
		&= \frac{2}{\varepsilon} \int_{A_1} \int_{A_2} J_{\varepsilon} (y-x)
			|u^2_\varepsilon(y) -u^1_\varepsilon(x)|^2 \,dy \,dx \\
		&\quad +2 \varepsilon \int_{A_1} \left( \int_{A_1} J_{\varepsilon} 
			(y-x)\frac{|u_\varepsilon^{1}(y) - u_\varepsilon^{1}(x)|}
			{\varepsilon} dy - \rho_\varepsilon^{1}(x) \right) \int_{A_2}J_{\varepsilon} (y-x)
			 \frac{|u_\varepsilon^{2}(y) - u_\varepsilon^{1}(x)|}
			{\varepsilon} \,dy \,dx \\
		&\quad + \varepsilon \int_{A_1} \left( \int_{A_2}J_{\varepsilon} (y-x) \frac{|u_\varepsilon^{2}(y) 
			- u_\varepsilon^{2}(x)|} {\varepsilon} dy  \right)^2 \,dx \\
		&\quad +2 \varepsilon \int_{A_2} \left( \int_{A_2}J_{\varepsilon} (y-x) 
			\frac{|u_\varepsilon^{2}(y) - u_\varepsilon^{2}(x)|}
			{\varepsilon} dy - \rho_\varepsilon^{2}(x) \right) \int_{A_1}J_{\varepsilon} (y-x)
			 \frac{|u_\varepsilon^{1}(y) - u_\varepsilon^{2}(x)|}
			{\varepsilon} \,dy \,dx \\
		&\quad + \varepsilon \int_{A_2} \left( \int_{A_1} J_{\varepsilon} (y-x)\frac{|u_\varepsilon^{1}(y) 
			- u_\varepsilon^{1}(x)|} {\varepsilon} dy  \right)^2 \,dx \\
		&\leq \frac{C}{\varepsilon} \int_{A_1} \int_{A_2} J_{\varepsilon} (y-x) 
			|u_\varepsilon(y) -u_\varepsilon(x)| \,dy \,dx,
	\end{align*}
	Since $u_\varepsilon$ converges to $u$ in $L^1(A_1 \cup A_2)$, we can use \cref{L2.7} and 
	Remark \ref{rm-linear-traces} to get that
	for any sequence $\varepsilon_h \rightarrow 0^+$
	the last term (with $\varepsilon$ replaced with $\varepsilon_h$) converges to zero as 
	$h  \rightarrow \infty$. The proof of the Proposition follows by the arbitrariness of $(\varepsilon_h)_h$. 
\end{itemize}\end{proof}

Using the above results we can further characterize the properties of
the limit energy density $\sigma$ as stated in the propositions below. 

\begin{prop} \label{finallyConst}
In the definition \ref{temp7} of $\mathcal{A}(e,\gamma)$ we may replace
the constraint $\int_{T_c} \rho \,dx \leq  \gamma\HN-1(C)$ by $\int_{T_c} \rho \,dx = \gamma\HN-1(C)$. 
Moreover, we may also assume that for every 
$(u,\rho) \in \mathcal{A}(e,\gamma)$ there exists a constant $C > 0$ such that
$u(x) = 1$ if $(x,e) \geq C$ and $u(x) = -1$ if $(x,e) \leq -C$.
\end{prop}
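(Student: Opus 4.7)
The plan is to show that the infimum defining $\sigma(e,\gamma)$ is unchanged under either of the two additional restrictions, by constructing, for every $(u,\rho)\in\mathcal{A}(e,\gamma)$ with $\mathcal{F}(u,\rho,T_C)<\infty$ and every $\eta>0$, a competitor in the restricted class whose energy differs by at most $\eta$. The reverse inequality is trivial.

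For the equality of the mass constraint, the idea is to saturate it by thickening $\rho$ far from the interface. Setting $\gamma':=\mathcal{H}^{N-1}(C)^{-1}\int_{T_C}\rho\,dx\leq\gamma$ and fixing $M>0$, for $L>0$ I would define
\begin{equation*}
\rho_L := \rho + \frac{\gamma-\gamma'}{L}\,\chi_{\{M<x_e<M+L\}},
\end{equation*}
which is $C$-periodic, nonnegative, and saturates the mass $\int_{T_C}\rho_L\,dx=\gamma\,\mathcal{H}^{N-1}(C)$. Since only the third term of $\mathcal{F}$ involves $\rho$, expanding the square splits the energy change into a diagonal correction $(\gamma-\gamma')^2\mathcal{H}^{N-1}(C)/L$ that vanishes as $L\to\infty$, and a cross term $-\tfrac{2(\gamma-\gamma')}{L}\int_{\{M<x_e<M+L\}\cap T_C}(G-\rho)\,dx$ with $G(x):=\int_{\real^N}J(y-x)|u(y)-u(x)|\,dy$, which by Cauchy--Schwarz is of order $L^{-1/2}\|G-\rho\|_{L^2(T_C)}$. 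Finiteness of $\mathcal{F}$ yields $G-\rho\in L^2(T_C)$, so both corrections vanish as $L\to\infty$.

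For the hard-truncation statement, I would set $u_M(x):=u(x)$ on $\{|x_e|\leq M\}$, $u_M:=1$ on $\{x_e>M\}$, $u_M:=-1$ on $\{x_e<-M\}$, and prove $\mathcal{F}(u_M,\rho,T_C)\to\mathcal{F}(u,\rho,T_C)$ as $M\to\infty$; the first step then restores the mass equality on the resulting competitor. The potential term converges monotonically since $W(u_M)=W(u)\chi_{\{|x_e|\leq M\}}$. For the two non-local terms the key inputs are: (i) the uniform decay $\eta(M):=\max\{\sup_{x_e>M}|u(x)-1|,\ \sup_{x_e<-M}|u(x)+1|\}\to 0$, which follows from the limit condition in $\mathcal{A}(e,\gamma)$ together with $C$-periodicity in the transverse variables; and (ii) the kernel-moment assumption $\int_{\real^N}J(h)|h|\,dh<\infty$, which forces the opposite-tail overlap $\int_{T_C\cap\{x_e>M\}}\int_{\{y_e<-M\}}J(y-x)\,dy\,dx$ to vanish as $M\to\infty$. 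Partitioning $T_C=A_0^M\cup A_+^M\cup A_-^M$ into slab and two tails and splitting both double integrals into nine pieces accordingly, the same-side tail--tail contributions to the middle term are non-positive since $u_M$ is constant there; the opposite-tail mixed contributions are $O(\eta(M))$ times the vanishing overlap from (ii); and the slab--tail pieces contribute $O(\eta(M))$ against finite kernel integrals. The third term is handled analogously using the $L^2$-bound on $G-\rho$.

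The main obstacle is precisely this non-local estimate in part (ii): since $u_M-u$ is uniformly small on the tails $A_\pm^M$ but these sets have infinite measure in $T_C$, naive dominated convergence fails and the $L^2$-norm of $u_M-u$ is infinite. The resolution is to extract $\eta(M)$ separately from each of the nine mixed pieces and to absorb the unbounded transverse volumes through $C$-periodicity, using the kernel-moment bound to keep the opposite-tail interactions summable and small, thereby producing a quantitative error estimate that vanishes as $M\to\infty$.
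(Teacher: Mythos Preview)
Your argument is essentially correct but follows a genuinely different route from the paper's. For the mass saturation, both you and the paper use the same idea of adding a thin, small-amplitude correction to $\rho$; the paper embeds this in Step~2 of Proposition~\ref{limsupPolyhedral} (the term $r_\varepsilon$), whereas you carry it out directly on the cell problem. For the hard truncation, however, the paper does \emph{not} truncate a near-optimal $u$ on the infinite strip $T_C$. Instead it bootstraps from the $\Gamma$-convergence machinery: it applies Proposition~\ref{limsupPolyhedral} to the pure jump function on a unit cube with $\mu=\gamma\,\HN-1\llcorner\{x_N=0\}$ to obtain a recovery sequence $(u_h,\rho_h)$ with $\limsup_h F_{\varepsilon_h}(u_h,\rho_h,Q)\leq\sigma(e_N,\gamma)$, and then re-runs the construction from the proof of Theorem~\ref{liminf}, which extends $u_h$ by $u_b=\pm 1$ outside the cube and shows that the resulting competitors (which \emph{are} eventually $\pm 1$) realise the same infimum after rescaling. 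This keeps all estimates on bounded sets and avoids any tail analysis on $T_C$, at the price of invoking the full liminf/limsup framework. Your approach is more elementary and self-contained but must confront the infinite measure of $T_C$ directly.

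Two points in your sketch deserve more care. First, your uniform-decay claim $\eta(M)\to 0$ does not follow from periodicity alone if the limit $\lim_{x_e\to\pm\infty}u(x)=\pm 1$ is read only pointwise in the transverse variable; periodicity plus pointwise convergence on the compact cross-section $C$ does not yield uniform convergence without continuity. You should either read the condition in $\mathcal{A}(e,\gamma)$ as uniform (which is the natural reading consistent with the paper's usage), or work instead with tail integrals of $W(u)$ and of the second energy term. Second, ``handled analogously'' for the surfactant term hides real work: the pointwise bound $|G_M-G|\leq C\eta(M)$ is useless on the infinite-measure strip, and the $L^2$-bound on $G-\rho$ alone is not enough. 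What makes it go through is Jensen's inequality, giving
\[
\|G_M-G\|_{L^2(T_C)}^2 \;\leq\; \|J\|_{L^1}\int_{T_C}\int_{\real^N} J(y-x)\big((u_M-u)(y)-(u_M-u)(x)\big)^2\,dy\,dx,
\]
followed by the same nine-fold splitting: on the same-sign tail blocks $(A_+^M,A_+^M)$ and $(A_-^M,A_-^M)$ one has $u_M-u=\pm 1-u$, so the integrand equals $(u(y)-u(x))^2$ and the integral is a tail of the \emph{finite} second energy term of $u$, hence vanishes as $M\to\infty$; the remaining blocks are $O(\eta(M)^2)$ against overlaps that are either bounded (slab--tail, via $\int J(h)|h|\,dh<\infty$) or vanishing (opposite tails, your point (ii)). With this in place, $G_M\to G$ in $L^2(T_C)$ and the third term follows.
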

\begin{proof}
In order to simplify notation we assume that $e = e_N$.
The first assertion follows directly from the proof of \cref{limsupPolyhedral}.
For the second assertion, we notice that, again by \cref{limsupPolyhedral},
taking $Q$ a unit cube centered at zero with one face orthogonal to $e_N$, 
and $\varepsilon_h \rightarrow 0^+$, there exist sequences $(u_h)$ and $(\rho_h)$,
such that $u_h \in L^1(Q)$, $\rho_h \in L^1(Q,[0,\infty))$, $\rho_h \mathcal{L}^N 
\weak* \gamma \HN-1 \llcorner \{x_N = 0\} \text{ on } Q$, 
$(u_h)$ converges in $L^1(Q)$ to the function
\begin{equation*}
	u: x\in Q \mapsto \begin{cases} 1, &x_N > 0\\
							-1, &x_N \leq 0,
						\end{cases}
\end{equation*}
and 
\begin{equation*}
	\sigma(e_N,\gamma) \geq \limsup_{h \rightarrow \infty} F_{\varepsilon_h}(u_h,\rho_h,Q).
\end{equation*}
To conclude one can repeat the proof of \cref{liminf} and show that $\sigma(e_N,\gamma)$
can also be attained as an infimum of functions $u_h$ that equal $\pm 1$ for large values
of $|x_N|$.
\end{proof}

\begin{prop}
For any $\gamma \geq 0$, the function $\sigma( \cdot,\gamma):  S^{N-1} \rightarrow \real$ 
is upper semi-continuous.
\end{prop}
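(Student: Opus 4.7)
The plan is to prove upper semi-continuity by a direct construction: given $e\in S^{N-1}$ and a sequence $e_n\to e$, I will transport a near-optimal admissible pair for $\sigma(e,\gamma)$ to an admissible pair for each $\sigma(e_n,\gamma)$ by an ambient rotation, and then pass to the limit in the energy. Concretely, fix $\eta>0$ and, by \cref{temp7} and \cref{finallyConst}, pick $(u,\rho)\in\mathcal{A}(e,\gamma)$ (with $C\in\mathcal{C}_e$) such that $\mathcal{F}(u,\rho,T_C)/\mathcal{H}^{N-1}(C)\leq\sigma(e,\gamma)+\eta$ and such that $u\equiv\pm 1$ for $|x_e|\geq K$. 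Choose rotations $R_n\in SO(N)$ with $R_n e=e_n$ and $R_n\to \mathrm{Id}$, and define $u_n:=u\circ R_n^{-1}$, $\rho_n:=\rho\circ R_n^{-1}$, $C_n:=R_n C\in\mathcal{C}_{e_n}$. Since rotations preserve periodicity lattices, the side length of $C_n$ and $\mathcal{H}^{N-1}$, one checks directly that $(u_n,\rho_n)\in\mathcal{A}(e_n,\gamma)$.

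By the change of variables $x=R_nx'$, $y=R_ny'$ (Jacobian $1$) in the definition of $\mathcal{F}(u_n,\rho_n,T_{C_n})$, the potential term is unchanged, while the other two terms become, respectively,
\begin{equation*}
\int_{T_C}\int_{\mathbb{R}^N} J(R_n h)\,(u(x+h)-u(x))^2\,dh\,dx
\quad\text{and}\quad
\int_{T_C}\bigl(\Phi_n(x)-\rho(x)\bigr)^2\,dx,
\end{equation*}
where $\Phi_n(x):=\int_{\mathbb{R}^N}J(R_n h)|u(x+h)-u(x)|\,dh$. I want to show both converge to the corresponding untwisted expressions with $J(R_nh)$ replaced by $J(h)$; then $\mathcal{F}(u_n,\rho_n,T_{C_n})/\mathcal{H}^{N-1}(C_n)\to \mathcal{F}(u,\rho,T_C)/\mathcal{H}^{N-1}(C)$, whence
$\limsup_n \sigma(e_n,\gamma)\leq \sigma(e,\gamma)+\eta$, and letting $\eta\to 0$ concludes the proof.

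The kinetic term is handled by setting $\psi(h):=\int_{T_C}(u(x+h)-u(x))^2\,dx$ and noting that the hypothesis $u\equiv\pm 1$ for $|x_e|\geq K$ yields both continuity of $\psi$ (by $L^1$-continuity of translations, since the integrand vanishes outside a slab of width $\mathcal{O}(K+|h|)$) and the bound $\psi(h)\leq C(1+|h|)$. Substituting $h'=R_n h$ and using the dominant $J(h')(1+|h'|)\in L^1$ (guaranteed by \eqref{kernel}) together with the pointwise convergence $\psi(R_n^{-1}h')\to\psi(h')$, dominated convergence gives the desired limit. For the surfactant term the key observations are: (i) $\Phi_n(x)\to\Phi(x)$ for every $x$, which follows from $L^1_{\mathrm{loc}}$-continuity of the rotation action applied to $y\mapsto|u(x+y)-u(x)|$ combined with dominated convergence against $2J\in L^1$; (ii) a uniform decay estimate $\Phi_n(x)\leq G(x)$ with $G\in L^2(T_C)$, obtained by noting that for $x_e>K+1$ the support of the relevant integrand forces $h\cdot e_n\leq K-x_e$, and $\int_{\{h\cdot e_n\leq -a\}}J\,dh\leq a^{-1}\int J(h)|h|\,dh$ by Chebyshev. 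Then $\Phi_n\to\Phi$ in $L^2(T_C)$ by dominated convergence, and since $|(\Phi_n-\rho)^2-(\Phi-\rho)^2|\leq 2(G+\rho)|\Phi_n-\Phi|$, a further application of dominated convergence (using $\|G\|_{L^2}\|\Phi_n-\Phi\|_{L^2}\to 0$ and $\rho|\Phi_n-\Phi|\to 0$ pointwise with dominant $4\|J\|_{L^1}\rho\in L^1$) yields convergence of the surfactant term.

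The main obstacle is exactly this last step: because $J$ is only $L^1$ (not continuous) and $T_C$ is unbounded, one cannot simply invoke $L^1$-continuity of the rotation action on $J$. The crux is to transfer the rotation from $J$ onto the factor $|u(\cdot+h)-u(\cdot)|$, exploit that this factor is bounded by $2$ and sits in $L^1_{\mathrm{loc}}$, and couple this with the uniform integrable decay of $\Phi_n$ at infinity provided by \cref{finallyConst} and the first-moment hypothesis on $J$. Once these two ingredients are in place, the passage to the limit is routine.
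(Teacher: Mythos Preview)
Your proof is correct, and the overall strategy (rotate a near-optimal pair and show the energy is continuous in the rotation) is the same as the paper's. The difference lies entirely in how you justify the convergence of the two nonlocal terms.

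The paper proceeds by first using \cref{truncation} to arrange $|u|\le 1$ and $\int_{\mathbb{R}^N}J(h)|u(y)-u(x)|\,dy-\rho(x)\ge 0$, and then estimating the \emph{difference} of the energies directly in terms of $|J-J\circ R_n|$. Concretely, one bounds
\[
\bigl|\mathcal{F}(u\!\circ\! R_n^{-1},\rho\!\circ\! R_n^{-1},T_{R_nC})-\mathcal{F}(u,\rho,T_C)\bigr|
\le C\!\int_{T_C}\!\!\int_{\mathbb{R}^N}\!|J-J\!\circ\! R_n|(y-x)\,|u(y)-u(x)|\,dy\,dx
+C\!\int_{T_C}\!\!\int_{\mathbb{R}^N}\!|J-J\!\circ\! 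R_n|(y-x)\,\rho(x)\,dy\,dx,
\]
then uses the \cref{finallyConst} slab-support to get $\int_{T_C}|u(x+z)-u(x)|\,dx\le C(|z|+D)$ and concludes from $\|J-J\circ R_n\|_{L^1}\to 0$ and $\|(J-J\circ R_n)(\cdot)|\cdot|\|_{L^1}\to 0$. In particular, your assertion that ``one cannot simply invoke $L^1$-continuity of the rotation action on $J$'' is mistaken: this is precisely what the paper does, and it yields a two-line argument. Note, for instance, that $|\Phi_n(x)-\Phi(x)|\le 2\|J\circ R_n-J\|_{L^1}$ \emph{uniformly} in $x$, and combining this with the uniform bound $\int_{T_C}(\Phi_n+\Phi+2\rho)\,dx<\infty$ (from the slab support and the first-moment hypothesis on $J$) handles the surfactant term immediately.

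Your route---transferring the rotation onto the $u$-factor, then invoking dominated convergence with a Chebyshev-type $L^2(T_C)$ decay bound on $\Phi_n$---is more laborious but works. One small point: in step (i) you write ``$L^1_{\mathrm{loc}}$-continuity of the rotation action \dots combined with dominated convergence against $2J$''. Strictly speaking, $L^1_{\mathrm{loc}}$ convergence of $h\mapsto|u(x+R_n^{-1}h)-u(x)|$ does not give the a.e.\ pointwise convergence that DCT requires; you need either a sub-subsequence argument or a density approximation of $J$ by $C_c$ functions (both are routine). In short: the argument is sound, but the paper's approach is considerably shorter and avoids the $L^2$ machinery altogether.
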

\begin{proof}
Given $\delta > 0$ and $\nu \in S^{N-1}$,
there exists an $(N-1)$-dimensional cube $C$ orthogonal to $\nu$ and a pair
$(u,\rho) \in \mathcal{A}(\nu,\gamma)$
such that  $u(x) = 1$ if $(x,e) \geq D$ and $u(x) = -1$ if $(x,e) \leq -D$ for some $D > 0$
(here we used \cref{finallyConst})
and such that $\mathcal{F}(u,\rho,T_C)\leq \sigma(\nu,\gamma) + \delta $. Thanks 
to \cref{truncation} we can also assume $|u(x)| \leq 1$ and 
$\int_{\real^N} J(h) |u(y) - u(x)| \,dy - \rho(x) \geq 0$ for a.e. $x \in T_C$. In order to simplify notation, and
 without loss of generality, in what follows we moreover assume that $\nu = e_N$ and that $C$ is of volume
 one and centered at zero. Let $(\nu_h)_h\subset S^{N-1}$ be such that $\nu_h\to e_N$ and let 
$(R_h)\subset SO(N)$ be such that $R_h e_N=\nu_h$, hence $R_h \rightarrow I$ where $I\in SO(N)$ 
denotes the identity matrix. We notice that 
$(u \circ R_h^{-1}, \rho \circ R_h^{-1}) \in \mathcal{A}(R_h(e_N),\gamma)$ and that $\mathcal{F}
(u \circ R_h^{-1},\rho \circ R_h^{-1},T_{R_h C})$ equals the functional $\mathcal{F}(u,\rho,T_C)$ 
but with $J$ replaced by $J \circ R_h$. Therefore we can estimate, using the boundedness of $u$
and the finiteness of the $L^1$-norm of $J$,
\begin{align*}
	|\mathcal{F}(u \circ R_h^{-1},\rho \circ R_h^{-1},T_{R_h C}) - \mathcal{F}(u,\rho,T_C)| 
		&\leq C \int_{T_C} \int_{\real^N} |J -  J \circ R_h|(y-x) |u(y) -u(x)| \,dy \,dx \\
	&\quad	+ C \int_{T_C} \int_{\real^N} |J -  J \circ R_h|(y-x) \rho(x) \,dy \,dx.
\end{align*}
Exploiting again $|u(x)|\leq 1$ and $u(x) = \pm 1$ if $|x_N| \geq D$ the following estimate holds true 
\begin{align*}
	\int_{T_C} \int_{\real^N} |J -  J \circ R_h|(y-x) |u(y) -u(x)| \,dy\,dx 
	&= \int_{\real^N}  |J -  J \circ R_h|(z) \int_{T_C} |u(x+z) - u(x)| \,dx\,dz \\
	&\leq  \int_{\real^N}  |J -  J \circ R_h|(z) \int_{T_C \cap \{ |x_N| < D + |z_N| \}} |u(x+z) - u(x)| \,dx\,dz \\
	& \leq C \int_{\real^N}  |J -  J \circ R_h|(z) (|z| + D) \,dz \\
	&= C \left( \int_{\real^N}  |J -  J \circ R_h|(z) \,dz + \int_{\real^N}  |J -  J \circ R_h|(z) |z|\,dz \right).
\end{align*}
This last term converges to zero as $h \rightarrow \infty$ because
$J \in L^1(\real^N)$ and the map $h\in\real^N \mapsto J(h) |h| \in L^1(\real^N)$ by (\ref{kernel}).
Moreover, it holds that
\begin{equation*}
	\int_{T_C} \int_{\real^N} |J -  J \circ R_h|(y-x) \rho(x) \,dy\,dx \leq
	|\rho|_{L^1(T_C)} \int_{\real^N}  |J -  J \circ R_h|(z) \,dz.
\end{equation*}
As a result we can write that
\begin{align*}
	\sigma(\nu,\gamma) +  \delta &\geq \mathcal{F}(u,\rho,T_C)  \\
	&\geq \limsup_{h \rightarrow \infty}
		\mathcal{F}(u \circ R_h^{-1},\rho \circ R_h^{-1},T_{R_h C}) 
\geq \limsup_{h \rightarrow \infty} \sigma(R_h(e_N),\gamma),
\end{align*}
which shows the claim by the arbitrariness of $\delta$. 
\end{proof}

Making use of the previous proposition, we now extend the proof of the limsup to the case of a
function $u\in\BV(\Omega,\{-1,1\})$ and of 
a measure $\mu$ which concentrates its $\HN-1$-absolutely continuous part on projections of finitely many
$(N-1)$ -dimensional polyhedral sets on the jump set of $u$.
\begin{prop} \label{temp8}
Let $u \in \BV(\Omega, \{-1,1\})$, $n\in\N$ and for $i\in\{1,2,\dots,n\}$ let $x_i \in \Omega \setminus S_u$,
$g$ be a piecewise constant function on the projections on $S_u$ of finitely many $(N-1)$-dimensional
polyhedral sets in $\real^N$ and let $\mu = \sum_{i = 1}^n \beta_i \delta_{x_i} + g
\HN-1 \llcorner S_u$. Then there exist functions
$u_\varepsilon \in {L^1}(\Omega) $ and  $\rho_\varepsilon \in {L^1}(\Omega,[0,\infty))$, 
such that  $u_\varepsilon \rightarrow u$ in $L^1(\Omega)$
and $\rho_\varepsilon \mathcal{L}^N \weak* \mu$  as $\varepsilon \rightarrow 0^+$ which satisfy
\begin{equation}
	\limsup _{\varepsilon \rightarrow 0^+} F_{\varepsilon}(u_\varepsilon, \rho_\varepsilon ,\Omega) 
	\leq \int_{S_u} \sigma \left( \nu_{u}, \frac{d \mu}{d \HN-1 \llcorner S_u} \right) d\HN-1.
\end{equation}
\end{prop}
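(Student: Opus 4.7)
The plan is to approximate the pair $(u,\mu)$ by pairs $(u_k,\mu_k)$ satisfying the hypotheses of \cref{limsupPolyhedral}, and then extract a single recovery sequence by a diagonal argument. First I would approximate $u$ by polyhedral functions: by the classical density of polyhedral sets in $\BV(\Omega,\{-1,1\})$ (see e.g.\ \cite[Theorem 3.42]{afp}), there exist polyhedral sets $P_k \subset \real^N$ with $\HN-1(\partial P_k \cap \partial \Omega) = 0$ such that, setting $u_k := 2\chi_{P_k \cap \Omega} - 1$, one has $u_k \to u$ in $L^1(\Omega)$ together with the strict convergence $\HN-1(S_{u_k}) \to \HN-1(S_u)$; in particular the associated vector measures $\nu_{u_k}\HN-1 \llcorner S_{u_k}$ converge strictly to $\nu_u \HN-1 \llcorner S_u$.

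Next, I transfer the surfactant measure. Let $\Sigma_1,\dots,\Sigma_M$ be the $(N-1)$-dimensional polyhedral sets, with normals $\nu_{\Sigma_j}$, whose projections on $S_u$ partition it into the pieces on which $g$ equals the constant $\gamma_j$. For each $k$, the projection $(S_{u_k})_{\Sigma_j}$ (see \cref{D5.1}) is the intersection of the polyhedral faces of $P_k$ with the cylinder $\{y + t \nu_{\Sigma_j} : y \in \Sigma_j, t \in \real\}$ and is therefore a polyhedral subset of those faces. I set $g_k := \gamma_j$ on $(S_{u_k})_{\Sigma_j}$, $g_k := 0$ elsewhere on $S_{u_k}$, and define
\begin{equation*}
\mu_k := \sum_{i=1}^n \beta_i \delta_{x_i} + g_k \HN-1 \llcorner S_{u_k}.
\end{equation*}
By construction $(u_k,\mu_k)$ fits the hypotheses of \cref{limsupPolyhedral}, and combining the strict convergence of $u_k$ with the cylindrical geometry of the $\Sigma_j$ gives $\mu_k \weak* \mu$ in $\mathcal{M}(\Omega)$.

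Applying \cref{limsupPolyhedral} to each pair $(u_k,\mu_k)$ yields sequences $(u_{k,\varepsilon},\rho_{k,\varepsilon})$ realising the limsup for that pair. The crux of the argument is then the inequality
\begin{equation*}
\limsup_{k \to \infty} \int_{S_{u_k}} \sigma(\nu_{u_k}, g_k) \, d\HN-1 \leq \int_{S_u} \sigma(\nu_u, g) \, d\HN-1.
\end{equation*}
Since $g$ and $g_k$ are piecewise constant with the same finite value set $\{\gamma_j\}$, both sides decompose into finitely many integrals; for each $j$ the task reduces to an anisotropic surface energy inequality whose integrand $\sigma(\cdot,\gamma_j)$ is upper semi-continuous in the normal direction by the previous proposition. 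A Reshetnyak-type upper semi-continuity argument, combining this USC with the strict convergence of $\nu_{u_k}\HN-1 \llcorner S_{u_k}$ prepared in the polyhedral step, yields the inequality. A standard diagonal procedure based on the metrizability of weak$\ast$ convergence on bounded subsets of $\mathcal{M}(\Omega)$ then selects $\varepsilon_k \to 0^+$ producing the desired recovery sequence.

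I expect the main obstacle to be precisely this passage to the limit in $k$. Because $\sigma$ is only upper semi-continuous (and not continuous) in its first argument, the full Reshetnyak continuity theorem is unavailable; only its upper semi-continuous variant applies, and it requires strict convergence of the perimeter measures rather than mere weak$\ast$ convergence. Ancillary care is also needed to ensure that the projections $(S_{u_k})_{\Sigma_j}$ do not leak mass outside $(S_u)_{\Sigma_j}$ in the limit, which again relies on the strict convergence of $u_k$ and on the finite polyhedral structure of the $\Sigma_j$.
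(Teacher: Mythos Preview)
Your proposal is correct and follows essentially the same route as the paper: polyhedral approximation of $u$ with strict convergence, transfer of the surfactant density by projecting the polyhedral sets $\Sigma_j$ onto $S_{u_k}$, application of \cref{limsupPolyhedral} to each $(u_k,\mu_k)$, and passage to the limit via the upper semi-continuity of $\sigma(\cdot,\gamma)$ combined with strict convergence. The only presentational differences are that the paper invokes the lower semi-continuity of the $\Gamma\text{-}\limsup$ in place of your explicit diagonal argument (these are equivalent), and it implements the Reshetnyak step concretely by approximating the USC function $\sigma(\cdot,\gamma)$ from above via a Yosida sup-convolution and then applying the standard Reshetnyak continuity theorem, which is exactly the mechanism behind the ``upper semi-continuous variant'' you invoke.
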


\begin{proof}
Let us assume without loss of generality that 
\begin{equation*}
\mu = \beta \delta_{x_1} + \gamma \HN-1 \llcorner \Sigma,
\end{equation*}
where $\Sigma$ is the projection of $\Sigma_g \subset \real^N$, a finite union of 
$(N-1)$-dimensional polyhedral subsets, on $S_u$.
There exists (\cite{ab} section 5.4) a sequence of polyhedral functions $u_h \in \BV(\Omega, \{-1,1\})$ 
such that
\begin{equation*}
	u_h \rightarrow u \text{ in } L^1(\Omega), \quad |D u_h| \rightarrow |D u|.
\end{equation*}
If we write $\Sigma_h$ for the projection of $\Sigma_g$ to $S_{u_h}$, we have that
$\Sigma_h$ is a polyhedral subset of $S_{u_h}$ and that
\begin{equation*}
	\mu_h := \beta \delta_{x_1} + \gamma \HN-1 \llcorner \Sigma_h \weak* \mu.
\end{equation*}
We apply  \cref{limsupPolyhedral} to $u_h$ and $\mu_h$ and
use the lower semicontinuity of $\gammalimsup$  to obtain
\begin{align*}
	\gammalimsup_{\varepsilon \rightarrow 0^+}& F_{\varepsilon} (u,\mu) 
		\leq \liminf_{h \rightarrow \infty} \gammalimsup_{\varepsilon \rightarrow 0^+} 
		F_{\varepsilon} (u_h,\mu_h) \\
	& \leq \liminf_{h \rightarrow \infty} \int_{S_{u_h}} \sigma \left( \nu_{u_h}, 
		\frac{d \mu_h}{d \HN-1 \llcorner S_{u_h}} \right) \,d \HN-1 \\
	&\leq \limsup_{h \rightarrow \infty}\int_{S_{u_h} \cap \Sigma_h} \sigma (\nu_{u_h},\gamma)
		 \,d\HN-1
		+\limsup_{h \rightarrow \infty}\int_{S_{u_h} \
		\setminus \Sigma_h} \sigma (\nu_{u_h},0) \,d \HN-1.
\end{align*}
Note that for any fixed $\gamma \geq 0$, $A \subset \real^N$ open
and $v \in \BV(A, \{-1,1\})$ we know that $\sigma(\cdot, \gamma)$ is upper semi-continuous and that 
\begin{equation*}
	\int_{S_v \cap A} \sigma (\nu_{S_v},\gamma) \,d \HN-1
	= \frac{1}{2} \int_A \sigma \left( \frac{Dv}{|Dv|},\gamma \right) d |Dv|.
\end{equation*}
Approximating $\sigma(\cdot,\gamma)$ by bounded and continuous functions (e.g., via a sup-convolution
Yosida transform) and exploiting the Reshetnyak continuity theorem (see \cite{afp}) one obtains that
\begin{equation*}
	\limsup_{h \rightarrow \infty}\int_{S_{u_h} \cap \Sigma_h} \sigma (\nu_{u_h},\gamma) \,d \HN-1
	\leq \int_\Sigma \sigma (\nu_{u},\gamma) \,d \HN-1
\end{equation*}
and
\begin{equation*}
	\limsup_{h \rightarrow \infty}\int_{S_{u_h} \setminus \Sigma_h} \sigma (\nu_{u_h},0) \,d \HN-1
	\leq \int_{S_u \setminus \Sigma} \sigma (\nu_{u},0) \,d \HN-1,
\end{equation*}
i.e. the assertion.
\end{proof}

\begin{figure}
\begin{tikzpicture}
	\begin{scope}
		\clip(0,0) ellipse [x radius = 3, y radius = 2.2];
		\fill [fill = black!12] (3,-2.5) -- (-1,-3) -- (-4,1.5) -- (-1,3.5) -- cycle ;
	\end{scope}
	\begin{scope}
		\clip (-3,0) -- (0,2) -- (2,-1) -- (-1.66,-2) -- cycle;
		\draw [line width = 1.2 pt] (-3,0) .. controls (-2,-1) and (-1,2) .. (0.5,0);
		\draw [line width = 1.2 pt] (0.5,0) .. controls (1.8,-2) and (2,2) .. (2.7,-1);
	\draw [line width = 1.2 pt] (-3,0) -- (-0.5,1) -- (1,0) -- (2,0.6) -- (2.9,-0.5);
	\end{scope}
	\draw(0,0) ellipse [x radius = 3, y radius = 2.2];
	\draw (-3,0) .. controls (-2,-1) and (-1,2) .. (0.5,0);
	\draw (0.5,0) .. controls (1.8,-2) and (2,2) .. (2.7,-1);
	\draw (-3,0) -- (0,2);
	\draw (-3,0) -- (-0.5,1) -- (1,0) -- (2,0.6) -- (2.9,-0.5);
	\draw (0.4,1.6) node {$\Omega$};
	\draw (1.7,-0.7) node {$S_u$};
	\draw (-2.2,-0.4) node {$\Sigma$};
	\draw (-1.5,1.3) node {$\Sigma_g$};
	\draw (0.5,0.7) node {$\Sigma_h$};
	\draw (1.7,0.9) node {$S_{u_h}$};
\end{tikzpicture}
\caption{Sketch of the situation of Proposition \ref{temp8}}
\end{figure}

It remains to show the $\gammalimsup$-inequality in the case of a general 
measure $\mu \in \mathcal{M}(\Omega)$.

\begin{thm}[lim{-}sup inequality] \label{limsup}
Given $u \in \BV(\Omega, \{-1,1\})$ and a Radon measure $\mu \in \mathcal{M}(\Omega)$,
there are functions $u_\varepsilon \in {L^1}(\Omega) $, $u_\varepsilon \rightarrow u$ and  
$\rho_\varepsilon \in {L^1}(\Omega,[0,\infty))$, $\rho_\varepsilon \weak* \mu$
for $\varepsilon \rightarrow 0^+$, such that
\begin{equation}
	\limsup _{\varepsilon \rightarrow 0^+} F_{\varepsilon}(u_\varepsilon, \rho_\varepsilon,\Omega) 
	\leq \int_{S_u} \sigma \left( \nu_{u}, \frac{d \mu}{d \HN-1 \llcorner S_u} \right) d\HN-1.
\end{equation}
\end{thm}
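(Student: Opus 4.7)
The plan is to reduce the general case to Proposition \ref{temp8} by approximating an arbitrary $\mu\in\mathcal{M}(\Omega)$ with measures of the form admitted there, and then to invoke the lower semicontinuity of the $\Gamma$-$\limsup$ with respect to the product topology.

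I would start from the Radon-Nikodym decomposition $\mu = g\, \mathcal{H}^{N-1} \llcorner S_u + \mu^\perp$, where $g = d\mu/d(\mathcal{H}^{N-1}\llcorner S_u)\in L^1(\mathcal{H}^{N-1}\llcorner S_u)$ and $\mu^\perp$ is mutually singular with $\mathcal{H}^{N-1}\llcorner S_u$; note that $\mu^\perp$ does not contribute to the right-hand side of \eqref{limitenergy}. I would then construct a sequence $\mu_k = g_k\, \mathcal{H}^{N-1} \llcorner S_u + \sum_{i=1}^{n_k} \beta_i^k \delta_{x_i^k}$ of the form covered by \cref{temp8}, with $x_i^k \in \Omega\setminus S_u$ and $g_k$ piecewise constant on projections onto $S_u$ of finitely many $(N-1)$-dimensional polyhedral sets in $\real^N$, such that $\mu_k \weak* \mu$ in $\mathcal{M}(\Omega)$. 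The singular part $\mu^\perp$ is approximated weak-$\ast$ by finite sums of Dirac masses placed outside $S_u$, while $g$ is approximated in $L^1(\mathcal{H}^{N-1} \llcorner S_u)$ by such piecewise constant functions, exploiting the $\mathcal{H}^{N-1}$-rectifiability of $S_u$ and a local description of $S_u$ as a Lipschitz graph over hyperplanes.

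Applying \cref{temp8} to each $(u,\mu_k)$ yields a recovery sequence certifying $\Gamma\text{-}\limsup_{\e\to 0^+} F_\e(u,\mu_k,\Omega)\le \int_{S_u} \sigma(\nu_u,g_k)\,d\mathcal{H}^{N-1}$. Since the $\Gamma$-$\limsup$ is always lower semicontinuous with respect to the underlying topology and $\mu_k \weak* \mu$ while $u$ stays fixed, passing to the $\liminf$ in $k$ gives $\Gamma\text{-}\limsup_{\e\to 0^+} F_\e(u,\mu,\Omega)\le \liminf_{k\to\infty} \int_{S_u} \sigma(\nu_u,g_k)\,d\mathcal{H}^{N-1}$, and a standard diagonal extraction (using that weak-$\ast$ convergence is metrizable on norm-bounded subsets of $\mathcal{M}(\Omega)$) produces the required sequences $(u_\e,\rho_\e)$.

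The main obstacle is the remaining inequality $\liminf_k \int_{S_u} \sigma(\nu_u,g_k)\,d\mathcal{H}^{N-1} \le \int_{S_u} \sigma(\nu_u,g)\,d\mathcal{H}^{N-1}$, since Remark \ref{rm-sigma-decr} only grants monotonicity of $\sigma(\nu,\cdot)$. I would address this by first establishing the continuity of $\sigma(\nu,\cdot)$ on $[0,\infty)$: given a nearly optimal $(u,\rho)\in\mathcal{A}(\nu,\gamma)$ and small $\delta>0$, the competitors obtained by suitably adding to or subtracting from $\rho$ a term of the form $\pm\delta\chi_S$, with $S$ a slab orthogonal to $\nu$ (truncated from below at $0$ when needed), belong to $\mathcal{A}(\nu,\gamma\pm O(\delta))$, and expanding the quadratic third term of $\mathcal{F}$ together with the bound $\int J(y-x)|u(y)-u(x)|\,dy \le 2\|J\|_{L^1}$ (implied by $|u|\le 1$) shows that the energy change is of order $\delta$; combined with monotonicity this gives the continuity. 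With continuity at hand, the uniform bound $\sigma(\nu,g_k)\le \sigma(\nu,0)<\infty$ together with the $\mathcal{H}^{N-1}$-a.e. convergence along a subsequence of $g_k$ allows the application of the dominated convergence theorem on the finite measure $\mathcal{H}^{N-1}\llcorner S_u$, completing the proof.
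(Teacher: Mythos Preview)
Your overall architecture matches the paper's: decompose $\mu$, approximate the density $g$ by piecewise constant functions on polyhedral projections, approximate the singular part by Dirac masses off $S_u$, apply \cref{temp8}, and use the lower semicontinuity of the $\Gamma$-$\limsup$. The divergence is in how you pass to the limit in $\int_{S_u}\sigma(\nu_u,g_k)\,d\mathcal{H}^{N-1}$.

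The paper does \emph{not} prove continuity of $\sigma(\nu,\cdot)$. Instead it constructs the approximants $g_h$ so that $g_h\ge g$ except on a set whose $\mathcal{H}^{N-1}$-measure tends to zero (first truncating $g$ at height $h$, then approximating $g\wedge h$ from above by simple functions). On $\{g_h\ge g\}$ monotonicity alone gives $\sigma(\nu_u,g_h)\le\sigma(\nu_u,g)$, while on the exceptional set one bounds $\sigma(\nu_u,g_h)\le\sup_{\nu}\sigma(\nu,0)<\infty$ (finite by the upper semicontinuity of $\sigma(\cdot,0)$ on $S^{N-1}$) and lets its measure vanish. This is shorter than your route because it uses only monotonicity and boundedness of $\sigma$.

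Your route via continuity of $\sigma(\nu,\cdot)$ is correct in spirit, but the specific perturbation you propose has a gap: subtracting $\delta\chi_S$ and then truncating at $0$ need not reduce $\int_{T_C}\rho$ by an amount comparable to $\delta$ when $\rho$ is small on most of $S$, so you do not automatically land in $\mathcal{A}(\nu,\gamma-O(\delta))$. A clean fix is to \emph{scale} instead: after invoking \cref{truncation} and \cref{finallyConst} so that $0\le\rho(x)\le\int_{\real^N} J(y-x)|u(y)-u(x)|\,dy\le 2\|J\|_{L^1}$ and $\rho$ is supported in a fixed slab, set $\rho_t:=(1-t)\rho$. Then $(u,\rho_t)\in\mathcal{A}(\nu,(1-t)\gamma)$ and expanding the square gives
\[
\mathcal{F}(u,\rho_t,T_C)-\mathcal{F}(u,\rho,T_C)=\int_{T_C}\bigl(2t\rho(I-\rho)+t^2\rho^2\bigr)\,dx\le Ct\gamma\,\mathcal{H}^{N-1}(C),
\]
where $I(x)=\int_{\real^N} J(y-x)|u(y)-u(x)|\,dy$; this yields the missing Lipschitz (hence continuity) estimate for $\sigma(\nu,\cdot)$. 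With that correction your dominated-convergence argument goes through, and both proofs are valid; the paper's trades the extra analytic fact about $\sigma$ for a slightly more careful construction of the $g_h$.
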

\begin{proof}
Let us set
\begin{equation*}
g := \frac{d \mu}{d \HN-1 \llcorner S_u} \in L^1(S_u,\HN-1).
\end{equation*}
and notice that $g\geq 0$.
We can use the equiintegrability of $ g$ and Chebyshev's inequality to show that the functions 
$g \wedge h \in L^\infty(S_u,\HN-1)$ converge to $g$ in $L^1(S_u, \HN-1)$ as $h \rightarrow \infty$.
Moreover, since $\HN-1(S_u) < \infty$ we have that
\begin{equation*}
	\int_{S_u} g \wedge h \ d\HN-1 =  \inf \left\{ \int_{S_u} s \ d\HN-1 : s \geq g \wedge h,  
	s \text{ simple}\right\}.
\end{equation*}
Therefore, there exist simple functions $g \wedge h \leq s_h \leq h$
such that $\lim_{h \rightarrow \infty} s_h = g$ in $L^1$ and
$\HN-1(\{ s_h < g \}) < C_h$ hold, where 
$\lim_{h \rightarrow \infty} C_h = 0$.
Using the inner regularity of $\HN-1$, we can find simple functions $g_h \leq s_h$ supported on 
(finitely many) pairwise  disjoint compact subsets of $S_u$ 
that can be chosen as projections of polyhedral sets on $S_u$ such that 
$\HN-1(\{ g_h < s_h \}) < \frac{1}{h^2}$, which also implies that 
$\int_{S_u} |g_h - s_h| \leq \frac{2}{h}$. To sum up, we observe that there exist functions
\begin{equation*}
	g_h = \sum_{i=0}^{n_h} \gamma_{h,i} \chi_{K'_{h,i}},	\quad K'_{h,i} \subset S_u 
	\text{ compact, projections of polyhedral sets  on } S_u \text{, pairwise disjoint}
\end{equation*}
which satisfy
\begin{equation*}
	\lim_{h \rightarrow \infty} g_h = g \text{ in } L^1(S_u,{\mathcal H}^{N-1}), \quad
	\lim_{h \rightarrow \infty}{\mathcal H}^{N-1}(\{ g_h < g \}) = 0.
\end{equation*}
By the boundedness of $\mu$, we find measures $\sum_{i=0}^{m_h} \beta_{h,i} \delta_{x_{h,i}}$
supported outside $S_u$ such that
\begin{equation*}
	\sum_{i=0}^{m_h} \beta_{h,i} \delta_{x_{h,i}} \weak* \mu  - g {\mathcal H}^{N-1} \llcorner S_u.
\end{equation*}
It follows that
\begin{equation*}
	\mu_h := g_h {\mathcal H}^{N-1} \llcorner S_u
	+ \sum_{i=0}^{m_h} \beta_{h,i} \delta_{x_{h,i}} \weak* \mu.
\end{equation*}
We can finally combine the lower-semicontinutiy of the $\Gamma$-$\limsup$ with the result of \cref{temp8} to obtain that
\begin{align*}
	\gammalimsup_{\varepsilon \rightarrow 0^+} F_{\varepsilon}(u,\mu,\Omega)
		&\leq \liminf_{h \rightarrow \infty} \,  \gammalimsup_{\varepsilon \rightarrow 0^+}
		 F_{\varepsilon}(u,\mu_h,\Omega) \\
	&\leq \liminf_{h \rightarrow \infty} \int_{S_u} \sigma(\nu_{u},g_h) \,d {\mathcal H}^{N-1} \\
	&\leq \liminf_{h \rightarrow \infty} \int_{S_u} \sigma(\nu_{u}, g) \,d {\mathcal H}^{N-1}
		+ C\lim_{h \rightarrow \infty}{\mathcal H}^{N-1}(\{ g_h < g \}) \\
	& =\int_{S_u} \sigma(\nu_{u}, g) \,d {\mathcal H}^{N-1}.
\end{align*}
where we have exploited the fact that $\sigma$ is non-increasing in the second variable and bounded thanks to its upper-semicontinuity on $S^{N-1}$.
This concludes the proof of the theorem.
\end{proof}

\end{document}